\documentclass[11pt,reqno]{amsart}

\usepackage[T1]{fontenc}
\usepackage{lmodern}

\usepackage[a4paper,left=2.7cm,right=2.7cm,top=2.4cm,bottom=2.4cm]{geometry}
\usepackage{amsmath,amssymb,amsfonts,mathtools,mathrsfs,bm}
\usepackage{enumitem}
\usepackage[dvipsnames]{xcolor}
\usepackage{hyperref}
\usepackage{bookmark}

\hypersetup{
        colorlinks=true,
        linkcolor=MidnightBlue,
        citecolor=RoyalBlue,
        urlcolor=RoyalBlue,
        pdfauthor={Xingyu Wang},
        pdftitle={The sharp-interface limit of the matrix-valued Allen--Cahn equation}
}

\allowdisplaybreaks[3]
\numberwithin{equation}{section}
\newtheorem{theorem}{Theorem}[section]
\newtheorem{proposition}[theorem]{Proposition}

\newtheorem{lemma}[theorem]{Lemma}

\theoremstyle{definition}
\newtheorem{definition}[theorem]{Definition}
\newtheorem{remark}[theorem]{Remark}

\setlist[enumerate]{leftmargin=2.5em,itemsep=0.25em,topsep=0.35em}

\newcommand{\mat}[1]{\mathbf{#1}}
\renewcommand{\aa}{\mat{A}}
\newcommand{\aae}{\aa_\ee}
\newcommand{\aaek}{\aa_{\ee_k}}
\newcommand{\bb}{\mat{B}}
\newcommand{\cc}{\mat{C}}
\newcommand{\ii}{\mat{I}}

\newcommand{\qq}{\mat{Q}}
\newcommand{\rr}{\mat{R}}

\newcommand{\xx}{\mat{X}}
\newcommand{\yy}{\mat{Y}}
\newcommand{\zz}{\mat{Z}}

\newcommand{\br}{\mathbb{R}}
\newcommand{\rd}{\mathbb{R}^d}

\newcommand{\rk}{\mathbb{R}^k}
\newcommand{\Mn}{\mathbb{M}_n}
\newcommand{\An}{\mathbb{A}_n}
\newcommand{\ee}{\varepsilon}
\renewcommand{\o}{\Omega}

\newcommand{\on}{\mathbb{O}_n}
\newcommand{\onp}{\mathbb{O}_n^+}
\newcommand{\onn}{\mathbb{O}_n^-}
\newcommand{\onpm}{\mathbb{O}_n^\pm}
\newcommand{\hn}{\mathcal{H}^{d-1}}
\newcommand{\dd}{\mathrm{d}}
\newcommand{\dx}{\,\dd x}
\newcommand{\dt}{\,\dd t}

\newcommand{\dgt}{\mathrm{d}_{\Gamma_t}}

\newcommand{\wf}{\widetilde f}
\newcommand{\wff}{\widetilde F}
\newcommand{\dwf}{\mathrm d_{\wff}}
\newcommand{\dwfe}{\mathrm d_{\wff}^{\ee}}
\newcommand{\pe}{\psi_\ee}
\newcommand{\hhe}{H_\ee}
\newcommand{\nne}{n_\ee}

\newcommand{\p}{\partial}
\newcommand{\sumod}{\sum_{i=1}^d}

\DeclarePairedDelimiter{\norm}{\lVert}{\rVert}
\DeclarePairedDelimiter{\abs}{\lvert}{\rvert}
\newcommand{\loc}{\mathrm{loc}}

\DeclareMathOperator{\dist}{dist}
\DeclareMathOperator{\tr}{tr}

\DeclareMathOperator{\spt}{spt}

\renewcommand{\div}{\operatorname{div}}

\title{The sharp-interface limit of the matrix-valued Allen--Cahn equation}
\author{Xingyu Wang}
\email{xingyuwang@sjtu.edu.cn}
\keywords{Matrix-valued Allen--Cahn equation; sharp-interface limit; modulated energy ; transmission condition.}
\begin{document}

\begin{abstract}
	We study the sharp-interface limit of a matrix-valued Allen--Cahn equation with the Saint Venant--Kirchhoff potential
	\[
		F(\aa)=\frac14\norm{\aa\aa^\top-\ii}^2 .
	\]
	The zero set of this potential is the orthogonal group \(\on=\onp\cup\onn\), and the corresponding limiting problem combines mean-curvature motion of the interface with harmonic-map heat flow in the two bulk phases.  The proof combines a modulated-energy argument with compactness estimates
obtained from two skew-symmetric commutator formulations of the equation. The method avoids the spectral analysis of linearized operators around quasi-minimal connecting orbits and the construction of high-order matched asymptotic expansions.  In particular, the limiting maps satisfy the minimal-pair condition on the moving interface and the weak transmission identities which, for smooth limits, are equivalent to the Neumann-type jump condition of the sharp-interface system.
\end{abstract}

\maketitle
\setcounter{tocdepth}{1}
\tableofcontents

\section{Introduction}
\subsection{Background}

The sharp-interface limit in variational dynamics is a classical topic in
partial differential equations and geometric analysis.  The scalar
Allen--Cahn equation
\[
    \p_t u_\ee=\Delta u_\ee-\ee^{-2}F'(u_\ee),
    \qquad u_\ee:\Omega\times(0,T)\to\br,
\]
with the standard double-well potential
\(F(u)=\frac14(1-u^2)^2\), is the prototypical model. As
\(\ee\downarrow0\), solutions approach a two-phase state, and the separating
interface evolves by mean curvature flow.  The corresponding level-set and
viscosity-solution formulations of mean-curvature motion were developed in
\cite{Chen1991,Evans1991}, and the convergence of Allen--Cahn type equations to
mean-curvature motion was studied in
\cite{Evans1992,Ilmanen1993,Soner1997}.

Keller, Rubinstein, and Sternberg extended this picture to vector-valued
order parameters \cite{Rubinstein1989fast,Rubinstein1989reaction}.  In their
setting the zero set of the potential consists of two disjoint smooth manifolds
\(N^\pm\subset\rk\).  Formal asymptotic expansions predict a coupled limiting
system: the interface moves by mean curvature, while the two bulk phases evolve
by harmonic-map heat flow into \(N^\pm\).  The rigorous justification of this
prediction is the Keller--Rubinstein--Sternberg problem.  Related elliptic and
harmonic-map aspects of phase transitions with higher-dimensional potential
wells were studied in \cite{Lin2012,Lin2019}.  A main difficulty is
that, when the wells are non-trivial manifolds, the interfacial energy may
depend on the limiting traces of the bulk maps.  In the fully minimally paired case this dependence is absent, whereas in
the partially minimally paired case it produces a genuine coupling between
the interface and the bulk dynamics \cite{Fei2023,Lin2012}.  The notions of
fully and partially minimally paired potentials are recalled in
Definition \ref{def:minimal-connection}.

Matrix-valued Allen--Cahn type models and their interface dynamics have also
appeared in numerical and asymptotic studies; see, for instance,
\cite{Wang2019}.  Fei, Lin, Wang, and Zhang \cite{Fei2023} systematically
studied the dynamic sharp-interface limit of the matrix-valued Allen--Cahn
equation
\begin{equation}\label{allen-cahn1}
    \p_t\aae=\Delta\aae-\ee^{-2}(\aae\aae^\top\aae-\aae),
    \qquad
    \aae:\Omega\times(0,T)\to\Mn .
\end{equation}
Here \(\Mn\) is the space of real \(n\times n\) matrices and
\[
    D F(\aa)=\aa\aa^\top\aa-\aa,
    \qquad
    F(\aa)=\frac14\norm{\aa\aa^\top-\ii}^2 .
\]
The potential vanishes precisely on the orthogonal group
\[
    \on=\onp\cup\onn,
\]
where \(\onpm\) denotes the set of orthogonal matrices with determinant
\(\pm1\).  The limiting sharp-interface system consists of harmonic-map heat
flow in the two bulk phases, mean-curvature motion of the interface, the
minimal-pair condition, and a Neumann-type transmission condition:
\begin{subequations}\label{system}
    \begin{align}
        \p_t\aa_\pm
        &=
        \Delta\aa_\pm
        -
        \sumod \p_i\aa_\pm\aa_\pm^\top\p_i\aa_\pm,
        && \aa_\pm\in\onpm
        \quad\text{in }\Omega_t^\pm,
        \label{limit-heat-flow}
        \\
        V&=\kappa,
        &&\text{on }\Gamma_t,
        \label{limit-mcf}
        \\
        (\aa_+,\aa_-)&\text{ is a minimal pair},
        &&\text{on }\Gamma_t,
        \label{limit-dirichlet}
        \\
        \p_\nu\aa_+&=\p_\nu\aa_-,
        &&\text{on }\Gamma_t.
        \label{limit-neumann}
    \end{align}
\end{subequations}
Here \(V\), \(\kappa\), and \(\nu\) denote the normal velocity, the mean
curvature, and the chosen unit normal to \(\Gamma_t\), respectively.  For
\(n\ge2\), not every pair in \(\onp\times\onn\) is minimal.  This partial
minimal pairing prevents a direct application of the classical
matched-asymptotic construction of de Mottoni and Schatzman
\cite{DeMottoni1995}.  To overcome this obstacle, \cite{Fei2023} introduced
quasi-minimal connecting orbits and developed a delicate spectral and
orthogonal-decomposition theory for the corresponding linearized operators.

A different approach is provided by the modulated-energy, or relative-entropy,
method.  For the scalar Allen--Cahn equation, this method gives a direct
stability estimate of a diffuse interface around a given smooth
mean-curvature-flow interface \cite{Fischer2020a}; related relative-entropy
ideas also appear in \cite{Jerrard2015,Fischer2020}.  For vector-valued sharp-interface problems, related nematic--isotropic
transition models in liquid crystals were studied in
\cite{Fei2015,Fei2018,Laux2021}, and the relative-entropy method has been
applied to several fully minimally paired models \cite{Laux2021,Liu2024}.  More recently, Liu
\cite{Liu2025} justified the Keller--Rubinstein--Sternberg limit for a class of
truncated squared-distance potentials with two manifold-valued wells, including
partially minimally paired cases.  In that general framework, however, the
Neumann-type jump condition \eqref{limit-neumann} is not recovered.

The purpose of this paper is to give a modulated-energy proof of the
sharp-interface limit for the matrix-valued equation \eqref{allen-cahn1}.  The key point is that the argument not only yields compactness away from
the interface, but also recovers weak transmission identities which, for
smooth limits, are equivalent to the Neumann-type jump condition
\eqref{limit-neumann}.  This is
possible because the Saint Venant--Kirchhoff potential has an additional
algebraic structure that is absent in a general two-well potential.

The main ingredients are as follows.
\begin{itemize}
    \item \emph{Two commutator formulations.}
    Following the Chen--Shatah wedge-product idea, we rewrite
\eqref{allen-cahn1} in two skew-symmetric commutator forms.  For
\(\xx,\aa\in\Mn\), define
\[
    [\xx,\aa]_L:=\xx\aa^\top-\aa\xx^\top,
    \qquad
    [\aa,\xx]_R:=\aa^\top\xx-\xx^\top\aa .
\]
Both brackets take values in the space \(\An\) of skew-symmetric matrices.
Since
\[
    [D F(\aa),\aa]_L=0,
    \qquad
    [\aa,D F(\aa)]_R=0,
\]
smooth solutions of \eqref{allen-cahn1} satisfy
\begin{subequations}\label{commutator-equations}
    \begin{align}
        [\p_t\aae,\aae]_L
        =
        \sumod \p_i[\p_i\aae,\aae]_L,
        \label{commu-eq-}
        \qquad
        [\aae,\p_t\aae]_R
        =
        \sumod \p_i[\aae,\p_i\aae]_R.
    \end{align}
\end{subequations}
Both identities are needed.  In the limiting smooth regime, the left and right
skew-symmetric trace identities together are equivalent to
\(\p_\nu\aa_+=\p_\nu\aa_-\) under the minimal-pair constraint.
   \item  \emph{A matrix-adapted quasi-distance.} We construct a mollified quasi-distance \(\dwfe\) adapted to the
matrix-valued potential.  It satisfies the commutation laws
\[
    [D\dwfe(\aa),\aa]_L=0,
    \qquad
    [\aa,D\dwfe(\aa)]_R=0,
\]
and preserves the Modica-type differential inequality needed in the
modulated-energy argument, up to a lower-order error.  These two properties
allow the dissipative part of the modulated-energy inequality to control the
left and right commutators in \eqref{commutator-equations}.

\item \emph{Compactness and transmission.} The modulated energy gives uniform estimates away from the moving interface.  The commutator bounds then allow us to pass to the
limit in \eqref{commutator-equations}.  The resulting two weak transmission
identities are equivalent to \(\p_\nu\aa_+=\p_\nu\aa_-\) whenever the limiting
maps are smooth up to the interface.
\end{itemize}

\subsection{Main results}

We first fix some notation.  Throughout the paper,
\[
    \on:=\{\aa\in\Mn:\aa\aa^\top=\ii\},
    \qquad
    \onpm:=\{\aa\in\on:\det\aa=\pm1\},
\]
and
\[
    \An:=\{\xx\in\Mn:\xx^\top=-\xx\}.
\]

Let \(\Omega\subset\rd\) be a bounded smooth domain, and let \(\aa^{\mathrm{bd}}\in C^\infty(\overline\Omega;\onn) \) be a prescribed time-independent boundary datum. We consider the initial-boundary value problem 
\begin{subequations}\label{allen-cahn} 
\begin{align} \p_t\aae &= \Delta\aae-\ee^{-2}(\aae\aae^\top\aae-\aae), &&\text{in }\Omega\times(0,T), \label{allen-cahn-equation} \\ \aae &= \aa_{\ee,0}, &&\text{on }\Omega\times\{0\}, \label{allen-initial} \\ \aae &= \aa^{\mathrm{bd}}, &&\text{on }\p\Omega\times(0,T). \label{allen-boundary} 
\end{align} 
\end{subequations}

Let \(\{\Gamma_t\}_{t\in[0,T]}\) be a smooth family of closed hypersurfaces evolving by mean curvature flow and remaining strictly inside \(\Omega\). The hypersurface \(\Gamma_t\) separates \(\Omega\) into two open sets \(\Omega_t^\pm\), and we set 
\[ 
\Omega_T^\pm := \bigcup_{t\in(0,T)}\Omega_t^\pm\times\{t\}, \qquad \Gamma := \bigcup_{t\in[0,T]}\Gamma_t\times\{t\}. 
\] Let \(\dgt(x,t)\) be the signed distance to \(\Gamma_t\), chosen positive in \(\Omega_t^+\) and negative in \(\Omega_t^-\). For \(\delta>0\), define 
\[ 
\Gamma_t(\delta) := \{x\in\Omega:\abs{\dgt(x,t)}<\delta\}, \qquad \Gamma(\delta) := \{(x,t)\in\Omega\times[0,T]:\abs{\dgt(x,t)}<\delta\}. 
\] 
We assume that there exists \(\delta_\Gamma\in(0,1)\) such that
\[
P_{\Gamma_t}:\Gamma_t(\delta_\Gamma)\to\Gamma_t 
\]
is smooth for every \(t\in[0,T]\), and 
\[ 
\dist(\Gamma_t,\p\Omega)\ge \delta_\Gamma \qquad\text{for all }t\in[0,T]. 
\] 
In particular, \(\p\Omega\subset\Omega_t^-\) for all \(t\in[0,T]\).

\begin{theorem}[Sharp-interface limit under well-prepared data]\label{thm:main}
    Let \(\aa_{\ee,0}\in H^1(\Omega;\Mn)\cap L^\infty(\Omega)\) be initial data for \eqref{allen-cahn} satisfying \(\aa_{\ee,0}=\aa^{\mathrm{bd}}\) on \(\partial\Omega\). We assume that \(\{\aa_{\ee,0}\}\) is well prepared relative to
\(\Gamma_0\), namely, there exist \(C_0>0\) and \(    \aa_{0,\pm}\in L^2(\Omega_0^\pm;\onpm)\)
such that
\begin{equation}\label{well-prepared-assumption}
    \ee\|\aa_{\ee,0}\|_{L^\infty(\Omega)}
    +
    E_\ee[\aa_{\ee,0}\mid\Gamma_0]
    \le C_0\ee ,
\end{equation}
and
\begin{equation}\label{initial-strong-convergence}
    \aa_{\ee,0}
    \to
    \aa_{0,+}\chi_{\Omega_{0}^+}+\aa_{0,-}\chi_{\Omega_{0}^-}
    \qquad\text{strongly in }L^2(\Omega)
    \quad\text{as }\ee\downarrow0 .
\end{equation}
Here \(C_0\) is independent of \(\ee\), and
\(E_\ee[\aa_{\ee,0}\mid\Gamma_0]\) is defined in
\eqref{relative-definition}.

    Let \(\aae\) be the solution of \eqref{allen-cahn}.  Then there exist a
    sequence \(\ee_k\downarrow0\) and maps
    \[
        \aa_\pm
        \in
        H^1((0,T);L^2(\Omega_t^\pm;\onpm))
        \cap
        L^\infty((0,T);H^1(\Omega_t^\pm;\onpm)),\quad \aa_\pm(0)=\aa_{0,\pm}
    \]
    such that
    \begin{equation}\label{main-local-convergence}
        \aa_{\ee_k}
        {\rightharpoonup}
        \aa_\pm
        \quad\text{weakly in }
        H^1_{\text{loc}}(\Omega_T^\pm).
    \end{equation}
    Moreover, the following assertions hold.

    \begin{enumerate}[label=\textup{(\arabic*)},leftmargin=2.5em]
        \item For every \(\Phi\in C_c^\infty((0,T)\times\Omega;\An)\),
        \begin{equation}\label{inter-equality-left}
            \sum_{\pm}\int_0^T\int_{\Omega_t^\pm}
            \left(
            \p_t\aa_\pm\aa_\pm^\top:\Phi
            +
            \sumod \p_i\aa_\pm\aa_\pm^\top:\p_i\Phi
            \right)\dx\dt=0 .
        \end{equation}

        \item For every \(\Psi\in C_c^\infty((0,T)\times\Omega;\An)\),
        \begin{equation}\label{inter-equality-right}
            \sum_{\pm}\int_0^T\int_{\Omega_t^\pm}
            \left(
            \aa_\pm^\top\p_t\aa_\pm:\Psi
            +
            \sumod \aa_\pm^\top\p_i\aa_\pm:\p_i\Psi
            \right)\dx\dt=0 .
        \end{equation}

        \item For a.e. \(t\in(0,T)\),
        \begin{equation}\label{result-minimal}
            \norm{\aa_+(x,t)-\aa_-(x,t)}=2
            \quad\text{for }\hn\text{-a.e. }x\in\Gamma_t .
        \end{equation}
    \end{enumerate}
\end{theorem}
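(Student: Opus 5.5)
The proof follows the modulated-energy strategy announced in the introduction, in four steps: a Gronwall estimate, bulk compactness, passage to the limit in the commutator equations, and identification of the interfacial traces. Throughout, the relative energy \(E_\ee[\aae\mid\Gamma_t]\) is built from \(\frac\ee2|\nabla\aae|^2+\frac1\ee F(\aae)\) minus the calibration term \(\int\xi\cdot\nabla(\dwfe\circ\aae)\dx\). Here \(\xi\) is an extension of the normal \(\nabla\dgt\), cut off outside \(\Gamma(\delta_\Gamma)\), and \(\dwfe\) is the mollified matrix quasi-distance.

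\textbf{Step 1: Gronwall estimate.} Differentiate \(E_\ee[\aae\mid\Gamma_t]\) in time using \eqref{allen-cahn-equation} and the mean curvature flow of \(\Gamma_t\). Following \cite{Fischer2020a}, the goal is
\[
    \frac{\dd}{\dd t}E_\ee[\aae\mid\Gamma_t]
    +\frac{1}{2\ee}\int_\Omega\Bigl(\ee^2\bigl|\p_t\aae\bigr|^2-\ldots\Bigr)\dx
    \le C\,E_\ee[\aae\mid\Gamma_t]+C\ee .
\]
The dissipation must contain \(\ee\bigl|\p_t\aae+(\nabla\cdot\xi)\,D\dwfe(\aae)/\ee\bigr|^2\)-type terms. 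Two inputs are used: the Modica inequality \(|D\dwfe|\le\sqrt{2F}+O(\ee)\), and the commutation laws \([D\dwfe(\aa),\aa]_L=[\aa,D\dwfe(\aa)]_R=0\). The \(O(\ee)\) error produced by the mollification is absorbed into \(C\ee\). Then \eqref{well-prepared-assumption} and Gronwall give \(E_\ee[\aae\mid\Gamma_t]\le C\ee\) uniformly on \([0,T]\), together with the time-integrated dissipation. The maximum principle applied to \(|\aae|^2\) yields \(\ee\|\aae\|_{L^\infty}\le C\).

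\textbf{Step 2: bulk compactness.} The modulated energy controls \(\int(1-\xi\cdot\nabla\dgt)\,\ee|\nabla\aae|^2\) and the discrepancy. Away from \(\Gamma_t\), where \(\xi\) is cut off, this gives \(\int_K\ee|\nabla\aae|^2+\ee^{-1}F(\aae)\le C\ee\) on any compact \(K\Subset\Omega_T^\pm\). Hence \(\nabla\aae\) is bounded in \(L^2(K)\), and \(\p_t\aae\) is bounded in \(L^2(K)\) by the time-integrated dissipation. Aubin--Lions then gives weak \(H^1_\loc\) and strong \(L^2_\loc\) convergence to limits \(\aa_\pm\). Since \(F(\aa_\pm)=0\), the limits satisfy \(\aa_\pm\in\on\). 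The sign of the determinant, \(\onpm\), is identified from the sign of \(\dwfe\) in the bulk, using smallness of \(\int|\dwfe(\aae)-\chi|\) and strong convergence of the initial data. The bounds \(L^\infty H^1\) and \(H^1L^2\) on \(\Omega_t^\pm\) follow from lower semicontinuity, since the local bounds are uniform up to \(\Gamma_t\) (they are controlled by \(\ee^{-1}\cdot C\ee\)).

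\textbf{Step 3: the transmission identities (1)--(2).} Test the left commutator equation \eqref{commu-eq-} against \(\Phi\in C_c^\infty((0,T)\times\Omega;\An)\) to get
\[
    \int\!\!\int [\p_t\aae,\aae]_L:\Phi+\sumod[\p_i\aae,\aae]_L:\p_i\Phi=0 .
\]
In the bulk, weak convergence of the derivatives combined with strong convergence of \(\aae\) passes each product to \(2\p_t\aa_\pm\aa_\pm^\top\) (using \(\aa\aa^\top=\ii\), so that \(\aa\,\p\aa^\top=-\p\aa\,\aa^\top\)). The main obstacle is showing that the contribution of the layer \(\Gamma_t(\delta)\) vanishes as \(\delta\to0\) uniformly in \(\ee\). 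Here \(|\nabla\aae|^2\) concentrates like \(\ee^{-1}\) and so is not integrable uniformly. The plan is to prove \(\int_{\Gamma(\delta)}|[\nabla\aae,\aae]_L|+|[\p_t\aae,\aae]_L|\le C\sqrt\delta\). The key point is that the commutation laws give \([\nabla(\dwfe\circ\aae)\text{-direction},\aae]_L=0\), so only the part of \(\nabla\aae\) orthogonal to \(D\dwfe(\aae)\) enters the commutator. The modulated energy bounds this orthogonal part by \(\int\ee^{-1}\bigl(\ee|\nabla\aae|^2-|\nabla\dwfe(\aae)|\bigr)\le C\), i.e.\ in \(L^2\) uniformly in \(\ee\). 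Cauchy--Schwarz over a set of measure \(O(\delta)\), together with the \(L^\infty\)-type bound on \(\aae\) near the wells, then gives the claim; the time-derivative term is handled in the same way using the dissipation. The right commutator identity (2) is proved identically.

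\textbf{Step 4: minimal pair (3).} Compare the energy liminf across \(\Gamma_t\) with the upper bound from \(E_\ee\le C\ee\). The bound forces the interfacial energy density to equal \(c_0\hn\llcorner\Gamma_t\), with \(c_0=\) the minimal connection cost. Slicing in the normal direction, the lower bound for \(\ee|\p_\nu\aae|^2/2+F/\ee\) along normal lines is \(\dwf(\aa_-,\aa_+)\), which exceeds \(c_0\) unless \(\|\aa_+-\aa_-\|=2\) (Definition \ref{def:minimal-connection}). The traces are well defined by Step 2. Fatou's lemma in \(t\) and a Fubini argument in tangential variables then give \eqref{result-minimal} for a.e.\ \(t\) and \(\hn\)-a.e.\ \(x\).
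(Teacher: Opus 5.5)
Your Steps 1--3 essentially follow the paper: Gronwall for the modulated energy, bulk compactness, and passage to the limit in the two commutator equations, using that only \(\nabla\aae-\Pi_{\aae}\nabla\aae\) enters the brackets. Step 4, however, does not work. The claim that the normal-slice lower bound ``exceeds \(c_0\) unless \(\norm{\aa_+-\aa_-}=2\)'' is false. The degenerate geodesic distance \(\inf_\gamma\int\sqrt{2F(\gamma)}\,\norm{\gamma'}\) from \(\aa_-\in\onn\) to \(\aa_+\in\onp\) equals \(\sigma\) for \emph{every} pair. Since \(F=0\) on the connected well \(\onn\), you can slide along \(\onn\) at zero cost from \(\aa_-\) to \(\bb_-:=\aa_+(\ii-2qq^\top)\), which forms a minimal pair with \(\aa_+\) by Lemma~\ref{minimal-equi}, and then cross along the profile of Lemma~\ref{minimal-orbits}. (Equally, \(\inf\mathbf e=\sigma\) over \(H^1_{p_\pm}(\br)\) for every pair; for non-minimal pairs the infimum is simply not attained.) Consequently, comparing the \(\liminf\) of the layer energy with \(\limsup\int e_\ee\le\sigma\hn(\Gamma_t)\) holds with equality for arbitrary traces and gives no information. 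At the \(\ee\)-level, the extra cost of a non-minimal pair is that of rotating along a well inside a layer of width \(\delta\). This is only of order \(\ee/\delta\), so it is invisible in any \(o(1)\) energy comparison.

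The missing idea is to use the \emph{rate} \(E_\ee\le C\ee\). Dividing by \(\ee\) gives the \(O(1)\) bound \eqref{relative-entropy-1}. You then need a quantitative one-dimensional estimate such as Lemma~\ref{kappa}: the rescaled defect \(\int_{-\delta}^{\delta}(\frac12\norm{\gamma'}^2+\ee^{-2}\wff(\gamma)-\ee^{-1}(\dwf\circ\gamma)')\,\dd s\) is bounded below by \(\min\{C_0\tau^2,\kappa\}/\max\{\ee,\delta\}\), with \(\kappa(\bb_+,\bb_-,0)>0\) for non-minimal pairs. Applied on normal segments \([-\delta_k,\delta_k]\) whose endpoint values converge to the traces, this lower bound blows up while the upper bound stays \(O(1)\).

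There are also three smaller points.
\begin{enumerate}
\item In Step 2 the bounds are \emph{not} uniform up to \(\Gamma_t\). The weight \(1-\abs{\xi}\sim\dgt^2\) degenerates, giving only \(C\delta^{-2}\) on \(\Omega_t^\pm\setminus\Gamma_t(\delta)\). Regularity of \(\aa_\pm\) up to the interface must instead come from \(\norm{\p_i\aa_\pm}=\frac12\norm{[\p_i\aa_\pm,\aa_\pm]_L}\) and the global commutator bounds of your Step 3.
\item Those commutator bounds require a uniform bound \(\norm{\aae}_{L^\infty}\le C\), which the maximum principle gives. The weaker \(\ee\norm{\aae}_{L^\infty}\le C\) you state is not enough.
\item An additive \(O(\ee)\) error in the Modica inequality is not harmless. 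It costs \(\ee\int\norm{\nabla\aae}\dx\sim\ee^{1/2}\gg\ee\). This is why the paper mollifies at scale \(\ee^K\) and uses \(F_\ee=F+\ee^{K-1}\), keeping \(\norm{D\dwfe}\le\sqrt{2F_\ee}\) exact.
\end{enumerate}
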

\begin{remark}
\indent 

Compared with the matched-asymptotic approach of \cite{Fei2023}, our proof does
not require spectral stability of linearized operators around quasi-minimal
connecting orbits.  Compared with the general modulated-energy framework of
\cite{Liu2025}, the matrix-valued structure allows us to pass to the limit in
two skew-symmetric commutator identities and thereby recover the missing
Neumann-type transmission condition.  

  As a byproduct, the theorem constructs a weak solution to the sharp-interface limiting system \eqref{system} along the prescribed smooth mean-curvature-flow interface \(\Gamma_t\).  This complements the weak-solution construction for the matrix-valued two-phase harmonic map flow obtained by minimizing-movement methods in \cite{Wang2025}.
\end{remark}

\begin{remark}\label{rem:main-comments}
	The following comments clarify the content of Theorem \ref{thm:main}.
	\begin{enumerate}[label=\textup{(\roman*)},leftmargin=2.5em]
		\item \textit{Meaning of the weak identities.} The two identities \eqref{inter-equality-left} and \eqref{inter-equality-right} are the global weak formulation of the limiting bulk equations together with the transmission laws on the moving interface.  If the test function is supported compactly in one phase, say in \(\Omega_T^+\), then \eqref{inter-equality-left} gives
		      \[
			      \p_t\aa_+\aa_+^\top
			      -\sumod\p_i(\p_i\aa_+\aa_+^\top)=0
			      \qquad\text{in }\mathcal D'(\Omega_T^+).
		      \]
		      Since \(\aa_+\aa_+^\top=\ii\), differentiating the constraint gives
		      \[
			      \p_i\aa_+\aa_+^\top+\aa_+\p_i\aa_+^\top=0 .
		      \]
		      Multiplying the preceding distributional identity on the right by \(\aa_+\) therefore yields
		      \[
			      \p_t\aa_+=\Delta\aa_+
			      -\sumod\p_i\aa_+\aa_+^\top\p_i\aa_+
			      \qquad\text{in }\Omega_T^+.
		      \]
		      The same argument applies in \(\Omega_T^-\).  Thus \eqref{inter-equality-left}, equivalently \eqref{inter-equality-right}, contains the harmonic-map heat flow into \(\onpm\) in the two bulk phases.  This is the first equation of the limiting system \eqref{system}.

		\item \textit{Recovery of the Neumann-type jump condition in the smooth case.}     
		      Suppose, only for interpretation, that \(\aa_\pm\) are smooth up to \(\Gamma_t\).  Let \(\nu\) be the normal on \(\Gamma_t\) pointing from \(\Omega_t^-\) to \(\Omega_t^+\).  Integrating \eqref{inter-equality-left} by parts in the two phases and using the bulk equations gives the left skew-symmetric trace identity
		      \[
			      \p_\nu\aa_+\aa_+^\top-\aa_+\p_\nu\aa_+^\top
			      =
			      \p_\nu\aa_-\aa_-^\top-\aa_-\p_\nu\aa_-^\top
			      \qquad\text{on }\Gamma_t .
		      \]
		      Similarly, \eqref{inter-equality-right} gives the right skew-symmetric trace identity
		      \[
			      \aa_+^\top\p_\nu\aa_+-\p_\nu\aa_+^\top\aa_+
			      =
			      \aa_-^\top\p_\nu\aa_- -\p_\nu\aa_-^\top\aa_-
			      \qquad\text{on }\Gamma_t .
		      \]
		      These two identities are both needed.  Indeed, by the minimal-pair condition and Lemma \ref{minimal-equi}, there is \(q\in\mathbb S^{n-1}\) such that
		      \[
			      \aa_+=\aa_-\rr,
			      \qquad
			      \rr:=\ii-2q q^\top,
			      \qquad
			      \rr^\top=\rr,
			      \quad
			      \rr^2=\ii .
		      \]
		      Set
		      \[
			      \yy:=\aa_-^\top\p_\nu\aa_- ,
			      \qquad
			      \zz:=\aa_-^\top\p_\nu\aa_+ .
		      \]
		      The left and right trace identities become, respectively,
		      \[
			      \zz=\yy\rr,
			      \qquad
			      \rr\zz=\yy .
		      \]
		      Hence \(\rr\yy\rr=\yy\).  Since \(\yy\) is skew-symmetric and \(\rr\) is the reflection with normal \(q\), this implies \(\rr\yy q=-\yy q\) and \(\yy q=0\), and therefore \(\yy\rr=\yy\).  Consequently \(\zz=\yy\), i.e.
		      \[
			      \aa_-^\top\p_\nu\aa_+=\aa_-^\top\p_\nu\aa_- .
		      \]
		      Multiplication by \(\aa_-\) gives \(\p_\nu\aa_+=\p_\nu\aa_-\).  Conversely, this Neumann condition immediately implies both skew-symmetric trace identities.  Therefore, in the classical regime, the weak transmission identities in the theorem are exactly the Neumann-type boundary condition in \eqref{limit-neumann}.

		\item \textit{Role of the minimal-pair condition.}      
		      The condition \eqref{result-minimal} is recovered from the smallness of the modulated energy by applying a one-dimensional lower bound on normal line segments across \(\Gamma_t\).  For this potential \(F(\aa)=\frac14\norm{\aa\aa^\top-\ii}^2\), Lemma \ref{minimal-equi} states that minimality is equivalent to \(\norm{\aa_+-\aa_-}=2\), or equivalently to the reflection representation \(\aa_+=\aa_-(\ii-2q q^\top)\).

		\item \textit{Use of the uniform \(L^\infty\)-bound.}       As in \cite{Liu2025}, the assumption \(\aa_{\ee,0}\in L^\infty\), together with the bounded boundary datum, gives a uniform \(L^\infty\)-bound for \(\aae\) by the maximum principle for the parabolic system.  This estimate is essential in two places.  First, it turns the projected estimates \eqref{project-estimate} for
		      \[
			      \nabla\aae-\Pi_{\aae}\nabla\aae,
			      \qquad
			      \p_t\aae-\Pi_{\aae}\p_t\aae
		      \]
		      into uniform bounds for the commutators, because
		      \[
			      \norm{[\xx,\aa]_L}+\norm{[\aa,\xx]_R}
			      \le 4\norm{\aa}\norm{\xx}.
		      \]
		      Second, it allows us to identify nonlinear weak limits such as
		      \[
			      [\p_i\aae,\aae]_L
			      \rightharpoonup
			      [\p_i\aa_\pm,\aa_\pm]_L
		      \]
		      from weak convergence of derivatives and strong local convergence of \(\aae\) in Proposition \ref{prop:limit-convergence}.

		\item \textit{Well-prepared initial data.}        
		      The theorem is stated under the well-preparedness assumption \eqref{well-prepared-assumption}.  This is the natural hypothesis for a relative-entropy proof, since it says that the diffuse interface initially has the same surface tension and location as \(\Gamma_0\), up to an error of order \(\mathcal{O}(\ee)\). Such well-prepared initial data can be constructed by the method of \cite[Theorem~1.2 and Section~6]{Liu2025}.

	\end{enumerate}
\end{remark}

\section{Preliminaries}

\subsection{Minimal connections}\label{minimal-conn}

We recall the notion of minimal connections from \cite[Section 3.1]{Fei2023}.  Let \(W:\rk\to\br_+\) be a smooth potential whose zero set is the disjoint union of two compact connected smooth manifolds \(N^\pm\subset\rk\) without boundary.  For \((p_+,p_-)\in N^+\times N^-\), consider
\begin{equation}\label{ode}
	\p_z^2 u=\nabla_uW(u),
	\qquad
	u(\pm\infty)=p_\pm,
	\qquad
	u:\br\to\rk .
\end{equation}

\begin{definition}\label{def:minimal-connection}
	A solution of \eqref{ode} is called a \emph{connecting orbit}, and \(p_\pm\) are called its \emph{ends}.  A connecting orbit is \emph{minimal} if it minimizes
	\begin{equation}\label{one-energy1}
		\mathbf e(u)=\int_\br\left(\frac12\abs{u'}^2+W(u)\right)\dd z
	\end{equation}
	among all curves in
	\[
		H^1_{p_\pm}(\br)=
		\{u\in H^1_{\loc}(\br):\lim_{z\to\pm\infty}u(z)=p_\pm\}.
	\]
	A pair \((p_+,p_-)\in N^+\times N^-\) is called a \emph{minimal pair} if it can be joined by a minimal connecting orbit.  The potential is \emph{fully minimally paired} if every pair is minimal and \emph{partially minimally paired} otherwise.
\end{definition}

For the potential \(F(\aa)=\frac14\norm{\aa\aa^\top-\ii}^2\), the minimal pairs and connecting orbits have an explicit characterization.

\begin{lemma}[Minimal pairs, \cite{Fei2023}]\label{minimal-equi}
	For \((\aa_+,\aa_-)\in\onp\times\onn\), the following statements are equivalent:
	\begin{enumerate}[label=\textup{(\roman*)},leftmargin=2.5em]
		\item \((\aa_+,\aa_-)\) is a minimal pair;
		\item \(\norm{\aa_+-\aa_-}=\min_{(\aa,\bb)\in\onp\times\onn}\norm{\aa-\bb}\);
		\item \(\norm{\aa_+-\aa_-}=2\);
		\item \(\aa_+=\aa_-(\ii-2q q^\top)\) for some \(q\in\mathbb S^{n-1}\).
	\end{enumerate}
\end{lemma}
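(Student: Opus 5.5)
The plan is to establish the cycle (ii)\(\Leftrightarrow\)(iii)\(\Leftrightarrow\)(iv) by elementary linear algebra, and then to link (i) to these through a one-dimensional calibration argument.

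\emph{Step 1: (ii), (iii), (iv).} For \(\aa_+\in\onp\) and \(\aa_-\in\onn\), set \(\mat{O}:=\aa_-^\top\aa_+\in\onn\). Then
\[
\norm{\aa_+-\aa_-}^2=\norm{\mat{O}-\ii}^2=2n-2\tr\mat{O}.
\]
By the real canonical form, the spectrum of \(\mat{O}\) consists of rotation blocks \(\cos\theta_j\pm i\sin\theta_j\), fixed directions (eigenvalue \(1\)), and an odd number of eigenvalues \(-1\). Hence \(\tr\mat{O}\le n-2\), with equality exactly when \(\mat{O}\) has a single eigenvalue \(-1\) and all other eigenvalues equal to \(1\). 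Since \(\mat{O}\) is orthogonal, this means \(\mat{O}=\ii-2qq^\top\) for some unit vector \(q\).

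It follows that \(\min_{\onp\times\onn}\norm{\aa-\bb}=2\), and this minimum is attained exactly in case (iv). This proves (ii)\(\Leftrightarrow\)(iii)\(\Leftrightarrow\)(iv).

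\emph{Step 2: lower bound on the energy.} For any \(\aa\in H^1_{\aa_\pm}(\br)\), the Modica--Mortola inequality gives
\[
\mathbf e(\aa)\ge\int_\br\sqrt{2F(\aa)}\,\norm{\aa'}\dd z .
\]
The right-hand side is at least the degenerate (geodesic) distance \(\mathrm d_F(\aa_-,\aa_+)\) in the metric \(\sqrt{2F}\,|\dd\aa|\). This distance is bounded below by \(\inf_{\onp\times\onn}\mathrm d_F\).

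The plan is to show that \(\mathrm d_F(\aa_-,\aa_+)\) is bounded below by a constant \(c_0\). I would obtain this by reducing to the scalar problem. Using the singular value decomposition of \(\aa(z)\), the smallest singular value \(\sigma_{\min}(\aa)\) moves continuously from \(1\) through \(0\). It must pass through \(0\), since \(\det\aa\) changes sign. Because \(F(\aa)\ge\frac14(\sigma_{\min}^2-1)^2\) and \(\abs{\sigma_{\min}'}\le\norm{\aa'}\), this yields
\[
\mathrm d_F\ge 2\int_0^1\frac{1-s^2}{\sqrt2}\dd s=:c_0 .
\]

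\emph{Step 3: identifying the minimal pairs.} For a reflection pair as in (iv), the curve
\[
\aa(z)=\aa_-\bigl(\ii-(1-\tanh(z/\sqrt2))qq^\top\bigr)
\]
reduces the problem to the scalar heteroclinic along \(q\) and attains \(c_0\). Hence pairs satisfying (iv) are minimal, which gives (iv)\(\Rightarrow\)(i).

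For the converse (i)\(\Rightarrow\)(iv), suppose \(\mat{O}\) is not a reflection. Then \(\aa_-^\top\aa(z)\) must also rotate in the bulk directions. I would show that the estimate of Step 2 is then strict: equality forces \(F(\aa)=\frac14(\sigma_{\min}^2-1)^2\), so all other singular values equal \(1\), and \(\norm{\aa'}=\abs{\sigma_{\min}'}\). These conditions force the singular vectors to be constant, so \(\aa(z)=\aa_-(\ii-(1-\sigma(z))qq^\top)\), and in particular \(\mat{O}=\ii-2qq^\top\).

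The main obstacle is this equality and rigidity step. The difficulty is that singular values are only Lipschitz where they cross. I would first try to handle this by working with \(\det\) and the eigenvalues of \(\aa^\top\aa\), or by invoking \cite{Fei2023} directly, as the lemma is attributed there.
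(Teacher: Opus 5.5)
Your Step~1 is correct and proves (ii)$\Leftrightarrow$(iii)$\Leftrightarrow$(iv). The lower bound of Step~2 is also correct: $F(\aa)=\frac14\sum_i(\sigma_i^2-1)^2\ge\frac14(1-\sigma_{\min}^2)^2$, singular values are $1$-Lipschitz, and $\det\aa$ must vanish somewhere. Together these give $\mathbf e\ge c_0=\frac{2\sqrt2}{3}$ for every curve from $\onn$ to $\onp$. The paper gives no proof of this lemma and defers to \cite{Fei2023}; the tool it imports from there is the distance bound $F\ge\frac14\rho^2(2-\rho)^2$ of Lemma~\ref{quasi-profile}. Your singular-value route is more elementary, needing only the spectral formula for $F$ and the intermediate value theorem for $\det$. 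The $\rho$-bound instead gives a function on all of $\Mn$ with a Modica-type gradient bound, which the paper needs anyway for $\dwf$, and its equality case $\aa=\bb(\ii-\rho qq^\top)$ packages the rigidity. The implication (iv)$\Rightarrow$(i) is fine up to orientation. With $1-\tanh(z/\sqrt2)$ your curve tends to $\aa_+$ at $-\infty$ and to $\aa_-$ at $+\infty$, the reverse of $H^1_{p_\pm}$; use $1+\tanh(z/\sqrt2)$, which is $\Theta_0$ in Lemma~\ref{minimal-orbits}.

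The genuine gap is (i)$\Rightarrow$(iv), which your plan does not close; two ingredients are missing.

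First, Definition~\ref{def:minimal-connection} measures minimality within the fixed-end class $H^1_{p_\pm}$. So showing $\mathbf e(u)>c_0$ for every curve joining a non-reflection pair gives no contradiction until you also show $\inf_{H^1_{p_\pm}}\mathbf e=c_0$ for \emph{every} pair in $\onp\times\onn$. This does hold. Concatenate a truncated reflection profile from $\aa_-$ to $\aa_-(\ii-2qq^\top)$ with a path in the connected group $\onp$ from $\aa_-(\ii-2qq^\top)$ to $\aa_+$, run over a $z$-interval of length $L$. Since $F=0$ on $\onp$, the second piece costs $O(1/L)$.

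Second, the rigidity step is only announced, and its stated outcome $\aa=\aa_-(\ii-(1-\sigma_{\min})qq^\top)$ cannot be right with $\sigma_{\min}\ge0$, because it returns to $\aa_-$ at $+\infty$. The polar factor must jump at the zero $z_0$ of $\det\aa$; equivalently, use the signed value $\operatorname{sign}(\det\aa)\,\sigma_{\min}$. The crossing problem you anticipate does not arise, and the argument closes as follows.
\begin{enumerate}
\item Equality in the chain forces equipartition $\norm{\aa'}=\sqrt{2F(\aa)}$, together with $\abs{\sigma_{\min}'}=\norm{\aa'}$ and $\sigma_1=\dots=\sigma_{n-1}=1$ on $\{F(\aa)>0\}$.
\item The set $\{F(\aa)>0\}$ is a single interval, since any further component would contribute positive weighted length. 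Hence $\sigma_{\min}<1$ is a simple singular value there.
\item Then $\sigma_{\min}'=a^\top\aa'b$ and Cauchy--Schwarz give $\aa'=\sigma_{\min}'ab^\top$.
\item Write $\aa=W(\ii-(1-\sigma_{\min})bb^\top)$ with $W$ the polar factor, and set $M=W^\top W'\in\An$. The previous identity becomes $\sigma_{\min}Mb=(1-\sigma_{\min})b'$ and $Mv=(1-\sigma_{\min})(b'\cdot v)b$ for $v\perp b$. Skew-symmetry of $M$ then forces $b'=0$ and $M=0$ on each side of $z_0$.
\item Continuity at $z_0$ gives $W_-(\ii-b_-b_-^\top)=W_+(\ii-b_+b_+^\top)$. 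Comparing kernels, $b_+=\pm b_-$, so $W_+^\top W_-$ fixes $b_-^\perp$ pointwise and $W_+\in\{W_-,\,W_-(\ii-2b_-b_-^\top)\}$.
\item Since $W_\pm=\aa_\pm$ have opposite determinants, $\aa_+=\aa_-(\ii-2b_-b_-^\top)$.
\end{enumerate}
With these two additions the proof is complete; as it stands, the necessity direction rests on the citation.
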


\begin{lemma}[Minimal connecting orbits, \cite{Fei2023}]\label{minimal-orbits}
	All minimal connecting orbits of \(F\) are given by
	\begin{equation}\label{minimal-profile}
		\Theta_\tau(\aa_+,\aa_-;z)
		=s_\tau(z)\aa_+ +(1-s_\tau(z))\aa_-,
		\qquad
		s_\tau(z)=s(z+\tau),
	\end{equation}
	where \((\aa_+,\aa_-)\) is a minimal pair and
	\begin{equation}\label{logistic-profile}
		s(z)=1-(1+e^{\sqrt2 z})^{-1}.
	\end{equation}
	In particular, \(\Theta_\tau\) solves \eqref{ode} and attains the minimum in \eqref{one-energy1}.
\end{lemma}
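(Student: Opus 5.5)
The plan is a Modica--Mortola calibration argument based on singular values. It gives a universal lower bound for the one-dimensional energy \(\mathbf e\) with \(W=F\). The profiles \(\Theta_\tau\) attain this bound, and the cases of equality force any minimizer to be of the form \eqref{minimal-profile}. Throughout I use Lemma \ref{minimal-equi} to pass between minimal pairs and the reflection representation \(\aa_+=\aa_-\rr\), \(\rr=\ii-2qq^\top\).

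\emph{Step 1: \(\Theta_\tau\) solves \eqref{ode}.} Write \(\aa(z)=s\aa_++(1-s)\aa_-=\aa_-(\ii-2s\,qq^\top)\) and set \(p=\aa_-q\), a unit vector. Then
\[
\aa\aa^\top-\ii=4s(s-1)pp^\top,
\qquad
F(\aa)=4s^2(1-s)^2,
\]
and \(DF(\aa)=(\aa\aa^\top-\ii)\aa=4s(s-1)(1-2s)\,\aa_-qq^\top\). On the other hand \(\aa''=s''(\aa_+-\aa_-)=-2s''\aa_-qq^\top\). So \eqref{ode} reduces to the scalar equation \(s''=2s(1-s)(1-2s)\). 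The logistic profile \eqref{logistic-profile} satisfies \(s'=\sqrt2\,s(1-s)\), which gives this equation, and it also gives equipartition \(\tfrac12|\aa'|^2=2s'^2=F(\aa)\). Hence
\[
\mathbf e(\Theta_\tau)=\int_0^1\sqrt{2F}\,|\aa_+-\aa_-|\,\dd s=\int_0^1 4\sqrt2\,s(1-s)\,\dd s=\tfrac{2\sqrt2}{3}.
\]

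\emph{Step 2: universal lower bound.} Take any \(u\in H^1_{\bb_\pm}(\br)\) with \(\bb_\pm\in\onpm\). Let \(\sigma(z)\) be the smallest singular value of \(u(z)\). It is \(1\)-Lipschitz in the Frobenius norm, so \(|\sigma'|\le|u'|\) a.e. Moreover \(F(u)=\frac14\sum_i(\sigma_i^2-1)^2\ge\frac14(1-\sigma^2)^2\). Since \(\det u\) changes sign, \(\sigma\) vanishes at some point \(z_0\), and \(\sigma\to1\) at \(\pm\infty\). By Young's inequality,
\[
\mathbf e(u)\ge\int_\br\sqrt{2F(u)}\,|u'|\,\dd z\ge\int_\br\tfrac1{\sqrt2}(1-\sigma^2)|\sigma'|\,\dd z\ge 2\cdot\tfrac1{\sqrt2}\int_0^1(1-\sigma^2)\,\dd\sigma=\tfrac{2\sqrt2}{3}.
\]
(If \(\sigma\) exceeds \(1\) somewhere, the integrand only adds nonnegative mass.) Together with Step 1, every minimal pair has minimal energy exactly \(\frac{2\sqrt2}3\), and each \(\Theta_\tau\) is a minimal connecting orbit.

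\emph{Step 3: rigidity, the main obstacle.} Let \(u\) be a minimal connecting orbit, so every inequality in Step 2 is an equality. This forces the following.
\begin{itemize}
\item All other singular values equal \(1\), so \(u=\mathbf U\,\mathrm{diag}(1,\dots,1,\sigma)\mathbf V^\top\).
\item \(|u'|=|\sigma'|\), i.e.\ \(u\) moves only through its smallest singular value.
\item \(\sigma\) is monotone on each side of \(z_0\).
\item Equipartition \(|u'|=\sqrt{2F(u)}\) holds.
\end{itemize}
The delicate point is to show that the singular frames are constant, i.e.\ the last left and right singular vectors \(p,q\) do not rotate. For this I expand \(u'\) in the moving frame. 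The derivative of the rank-one part \(\sigma\,pq^\top\) contributes a term \(\sigma'pq^\top\) plus terms involving \(p',q'\), and the rotation of the identity block contributes further terms. All of these extra terms are Frobenius-orthogonal to \(pq^\top\), so \(|u'|^2=\sigma'^2+(\text{nonnegative})\). The equality \(|u'|=|\sigma'|\) then kills the extra terms wherever \(\sigma\ne0\). Where \(\sigma\) is small, \(\sigma\) is allowed to change sign through \(z_0\); to handle this I would use \(u=\aa_-(\ii-2s\,qq^\top)\) with \(\tau:=1-2s\) playing the role of a signed \(\sigma\), rather than the SVD. Once the frames are constant, \(u=\aa_-(\ii-2s\,qq^\top)\) with the ends given by the reflection representation. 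By Lemma \ref{minimal-equi}, \((u(+\infty),u(-\infty))\) is then a minimal pair. Finally, equipartition gives \(s'=\sqrt2\,s(1-s)\), whose solutions with the right limits are exactly the translates \(s_\tau\). This yields \eqref{minimal-profile}.

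Since the lemma is quoted from \cite{Fei2023}, one could alternatively cite it there. The above is the self-contained route I would take.
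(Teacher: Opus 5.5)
The paper does not prove this lemma; it quotes it from \cite{Fei2023}. Your argument is a correct, self-contained alternative in outline, but Step~3 is still a sketch at two points, listed below.

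\textbf{Comparison.} The paper's closest tool is the $\rho$-based bound of Lemma~\ref{quasi-profile}: $F(\aa)\ge\frac14(2-\rho)^2\rho^2$, with equality iff $\aa=\bb(\ii-\rho qq^\top)$. That bound underlies the calibration $\dwf$ and the constant $\frac{2\sqrt2}{3}$ in \eqref{surface-tension}. You calibrate with the smallest singular value instead. Your bound $F\ge\frac14(1-\sigma_{\min}^2)^2$ is immediate from the spectral form of $F$, and the sign change of $\det u$ forces $\sigma_{\min}=0$ somewhere, so you bypass Lemma~\ref{quasi-profile}. In the equality regime $\rho=1-\sigma_{\min}$, so the two bounds coincide.

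The price is that you cannot read the structural form $\bb(\ii-\rho qq^\top)$, with $\bb$ and $q$ constant, off an equality case. You must extract it from $\norm{u'}=\abs{\sigma'}$. In return, Step~3 yields the reflection relation between the ends by itself. Lemma~\ref{minimal-equi} is then used only to know that $\mathbf e(u)=\frac{2\sqrt2}{3}$. Even that is dispensable: with fixed ends the infimum equals $\frac{2\sqrt2}{3}$ for every pair, because moving slowly inside $\onpm$ costs arbitrarily little energy. Steps~1 and~2 are correct, provided you write $\abs{1-\sigma^2}$ in the Step~2 display.

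\textbf{Point 1: freezing the frames.} First note that $u$ never meets $\on$. Otherwise equipartition gives $u'=0$ at that point, and uniqueness for \eqref{ode} makes $u$ constant. Hence $F(u)>0$ and $u'\ne0$ everywhere. This gives $0<\sigma<1$ away from $z_0$ and no plateau of $\sigma$ at $0$.

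The extra terms in $u'$ are each orthogonal to $pq^\top$, but not to one another. Equality therefore only says that their \emph{sum} vanishes, and you must show that this freezes the frames. On a half-line, write $u=\mat{U}\Sigma\mat{V}^\top$ with $\Sigma=\mathrm{diag}(1,\dots,1,\sigma)$, and set $\mat{S}=\mat{U}^\top\mat{U}'$ and $\mat{T}=\mat{V}^\top\mat{V}'$, both skew-symmetric. Then $\norm{u'}^2=\sigma'^2+\norm{\mat{S}\Sigma-\Sigma\mat{T}}^2$, so equality gives $\mat{S}\Sigma=\Sigma\mat{T}$. For $j<n$ this reads $\sigma S_{jn}=T_{jn}$ and $S_{jn}=\sigma T_{jn}$, hence $S_{jn}=T_{jn}=0$ because $\sigma<1$. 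The remaining entries give $S_{ij}=T_{ij}$ for $i,j<n$, which only reflects the freedom in choosing the SVD frame. Together these give $\mat{S}=\mat{T}$, so $p$, $q$ and $\mat{U}\mat{V}^\top$ are constant.

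\textbf{Point 2: the crossing at $z_0$.} This should not be handled by presupposing $u=\aa_-(\ii-2sqq^\top)$; that form is what you need to derive. On the two half-lines you obtain $u=\bb_\pm(\ii-(1-\sigma)q_\pm q_\pm^\top)$, with the constant orthogonal factors identified by the limits at $\pm\infty$. Continuity at $z_0$ gives
\[
\bb_+(\ii-q_+q_+^\top)=\bb_-(\ii-q_-q_-^\top).
\]
Comparing kernels gives $q_+=\pm q_-=:q$. Then $\bb_-^\top\bb_+$ fixes $q^\perp$ and has determinant $-1$, so $\bb_+=\bb_-(\ii-2qq^\top)$. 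Consequently $u=\bb_-(\ii-2sqq^\top)$ with $s=\frac{1\mp\sigma}{2}$ on the two sides. Since $u'\neq0$ everywhere, $s'$ never vanishes, so $s'>0$ and the logistic equation closes the argument.
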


\subsection{The quasi-distance function}\label{subsec:quasi-distance}

For \(\aa\in\Mn\), define the distances to the two connected components of
\(\on\) by
\begin{equation}\label{rho-def}
    \rho^\pm(\aa)
    :=
    \min_{\bb\in\onpm}\norm{\aa-\bb},
    \qquad
    \rho(\aa)
    :=
    \min\{\rho^+(\aa),\rho^-(\aa)\}.
\end{equation}
The following geometric lower bound is crucial.

\begin{lemma}[{\cite[Lemma~A.1]{Fei2023}}]\label{quasi-profile}
For every \(\aa\in\Mn\),
\begin{equation}\label{F-lower-bound}
    F(\aa)\ge \frac14(2-\rho(\aa))^2\rho(\aa)^2 .
\end{equation}
Equality holds if and only if
\[
    \aa=\bb(\ii-\rho(\aa)q q^\top)
\]
for some \(\bb\in\on\) and \(q\in\mathbb S^{n-1}\).  Moreover,
\(F(\aa)>1/4\) whenever \(\rho(\aa)>1\).
\end{lemma}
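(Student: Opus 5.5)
Equality in \eqref{F-lower-bound} holds exactly when \(\aa=\bb(\ii-\rho(\aa)qq^\top)\); this suggests a proof via the singular value decomposition. Write \(\aa=\mat{U}\Sigma\mat{V}^\top\) with \(\mat{U},\mat{V}\in\on\) and \(\Sigma=\mathrm{diag}(\sigma_1,\dots,\sigma_n)\), \(\sigma_i\ge0\). Then
\[
F(\aa)=\tfrac14\sum_i(\sigma_i^2-1)^2 .
\]
The first step is to compute \(\rho^\pm\) through the singular values. Minimizing \(\norm{\aa-\bb}\) over \(\bb\in\on\) amounts to maximizing \(\tr(\bb^\top\aa)\). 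By von Neumann's trace inequality, \(\max_{\on}\tr(\bb^\top\aa)=\sum_i\sigma_i\), attained at \(\bb=\mat{U}\mat{V}^\top\). If \(\det\aa\ge0\), then \(\mat{U}\mat{V}^\top\) can be taken in \(\onp\). The maximum over the other component \(\onn\) is \(\sum_i\sigma_i-2\sigma_{\min}\), obtained by flipping the sign along the smallest singular direction. Consequently
\[
\rho(\aa)^2=\sum_i(\sigma_i-1)^2=\min_{\on}\norm{\aa-\bb}^2 .
\]
The case \(\det\aa<0\) is symmetric. If \(\det\aa=0\), both components achieve \(\sum_i\sigma_i\), and the same formula holds.

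The second step reduces the claim to a scalar inequality. Set \(t_i=\abs{\sigma_i-1}\), so that \(\rho^2=\sum_i t_i^2\), and note \((\sigma_i^2-1)^2=t_i^2(\sigma_i+1)^2\). For \(\sigma_i\ge 1\) we have \((\sigma_i+1)^2\ge4\). For \(\sigma_i<1\) we have \(\sigma_i+1=2-t_i\). Hence in all cases \((\sigma_i^2-1)^2\ge t_i^2(2-t_i)^2\) when \(t_i\le 2\), and we need
\[
\sum_i t_i^2(2-t_i)^2\ \ge\ \rho^2(2-\rho)^2,\qquad \rho^2=\sum_i t_i^2 .
\]
Expanding gives \(t^2(2-t)^2=4t^2-4t^3+t^4\), so the claim is
\[
4\sum_i t_i^3-\sum_i t_i^4\le 4\rho^3-\rho^4 .
\]
I would prove this with the elementary bounds \(\sum_i t_i^3\le\rho\sum_i t_i^2=\rho^3\) and \(\sum_i t_i^4\ge\) something comparable. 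The cleanest route writes the difference as
\[
\sum_i t_i^2\bigl[(4\rho-\rho^2)-(4t_i-t_i^2)\bigr].
\]
Since \(g(t)=4t-t^2\) is increasing on \([0,2]\) and \(t_i\le\rho\), each bracket is nonnegative provided \(\rho\le2\). Here \(\rho\le\sqrt n\) a priori. When \(\rho>2\) or some \(t_i>2\), which only happens for \(\sigma_i>1\), I treat the case separately using the stronger factor \((\sigma_i+1)^2\ge(t_i+2)^2\). This gives \(F\ge\frac14\sum_i t_i^2(t_i+2)^2\ge\frac14\rho^2(2-\rho)^2\), by a similar monotonicity argument with \(\max_i t_i\ge\rho/\sqrt n\). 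The main obstacle is this bookkeeping in the regime \(\rho\ge2\); there I would compare directly, noting \(\sum_i t_i^4\ge\rho^4/n\), and check the inequality carefully.

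The equality case comes from tracking where each inequality is strict. Every \(\sigma_i\le1\) is needed, and at most one \(t_i\) may be nonzero (otherwise the bracket is strict for the smaller \(t_i\)). Then \(\Sigma=\ii-\rho e_je_j^\top\), which gives \(\aa=\mat{U}\mat{V}^\top(\ii-\rho qq^\top)\) with \(q=\mat{V}e_j\).

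The final claim follows from the same reduction. If \(\rho>1\) then \(\sum_i t_i^2>1\). If some \(\sigma_i\) has \(\sigma_i^2\ge2\) or \(\sigma_i=0\), we are done directly. Otherwise \(4F\ge\sum_i t_i^2(2-t_i)^2\ge\rho^2(2-\rho)^2\) for \(1<\rho<2\), which is not enough alone since it tends to \(1\) as \(\rho\to1\). So I would instead use \(4F\ge\sum_i t_i^2(2-t_i)^2\) with \(t_i\in[0,1]\) for \(\sigma_i\le1\), and observe that \(t^2(2-t)^2\ge t^2\) there, giving \(4F\ge\sum_i t_i^2=\rho^2>1\). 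When \(\sigma_i>1\), the factor \((\sigma_i+1)^2>4\) gives the same bound, with strictness.
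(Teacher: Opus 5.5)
The paper gives no proof of this lemma; it quotes it from the cited reference. Your argument therefore has to stand on its own, and it does so for $\rho(\aa)\le2$.

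\begin{itemize}
\item The formula $\rho(\aa)^2=\sum_i(\sigma_i-1)^2$ is right.
\item The pointwise bound $(\sigma_i^2-1)^2\ge t_i^2(2-t_i)^2$ holds for every $i$, not only when $t_i\le2$.
\item The identity $\sum_i t_i^2(2-t_i)^2-\rho^2(2-\rho)^2=\sum_i t_i^2\bigl(g(\rho)-g(t_i)\bigr)$, with $g(t)=4t-t^2$ increasing on $[0,2]$, proves the inequality in that range.
\item Your equality analysis correctly forces a single nonzero $t_j=\rho\le1$ with $\sigma_j=1-\rho$. This gives exactly the form $\aa=\mat{U}\mat{V}^\top(\ii-\rho qq^\top)$.
\end{itemize}

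Your last paragraph is also correct, and simpler than you make it. Since $\sigma_i+1\ge1$, we have $(\sigma_i^2-1)^2=t_i^2(\sigma_i+1)^2\ge t_i^2$ for every $i$ and every $\aa$, so $4F(\aa)\ge\rho(\aa)^2$. This gives $F>1/4$ whenever $\rho>1$ with no case split. It also gives the inequality for $2<\rho\le3$.

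The gap is the regime of large $\rho$. It cannot be closed, because for $n\ge2$ the inequality as stated ("for every $\aa\in\Mn$") is false there.

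\begin{itemize}
\item Take $\aa=s\ii$ with $s$ large. Then $\rho=\sqrt n(s-1)$ and $4F=n(s^2-1)^2\sim ns^4$, while $\rho^2(2-\rho)^2\sim n^2s^4$.
\item Concretely, in $\mathbb M_2$ the matrix $\aa=\mathrm{diag}(31,41)$ has $\rho=50$ and $4F=960^2+1680^2=3\,744\,000<5\,760\,000=50^2\cdot48^2$.
\end{itemize}

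Your proposed step for $\rho>2$ also fails on its own terms.

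\begin{itemize}
\item $\rho\le\sqrt n$ is not an a priori bound; $\rho$ is unbounded on $\Mn$.
\item $F\ge\frac14\sum_i t_i^2(t_i+2)^2$ is false as soon as some $\sigma_i<1$, since for those indices the factor is $(2-t_i)^2$.
\item Even granting that bound, $\sum_i t_i^4\ge\rho^4/n$ loses exactly the factor $n$ that the counterexample exhibits.
\end{itemize}

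The equality characterization also breaks outside $\rho\le2$. For $n\ge9$, a matrix with nine zero singular values and the rest equal to $1$ has $\rho=3$ and $4F=9=\rho^2(2-\rho)^2$, yet it is not of the form $\bb(\ii-3qq^\top)$.

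The correct statement restricts \eqref{F-lower-bound} and its equality case to $\rho(\aa)\le2$, or simply $\rho(\aa)\le1$. Together with $4F\ge\rho^2$, that is all the paper needs for $\wff\le F$ in \eqref{Ftilde-le-F}, the only place the lemma is used. With that restriction your proof is complete.
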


Motivated by Lemma \ref{quasi-profile}, define the one-dimensional comparison
potential
\begin{equation}\label{ftilde-def}
    \wf(r):=
    \begin{cases}
        \dfrac14 r^2(2-r)^2, & 0\le r\le 1,\\[0.5em]
        \dfrac14,           & r\ge 1.
    \end{cases}
\end{equation}
Set
\begin{equation}\label{Ftilde-def}
    \wff(\aa):=\wf(\rho(\aa)).
\end{equation}
Then Lemma \ref{quasi-profile} implies
\begin{equation}\label{Ftilde-le-F}
    \wff(\aa)\le F(\aa),
    \qquad
    \aa\in\Mn .
\end{equation}

The corresponding surface tension is
\begin{equation}\label{surface-tension}
    \sigma
    :=
    2\int_0^1\sqrt{2\wf(r)}\,\dd r
    =
    \frac{2\sqrt2}{3}.
\end{equation}
We define the quasi-distance function \(\dwf:\Mn\to[0,\sigma]\) by
\begin{equation}\label{dwf-def}
    \dwf(\aa):=
    \begin{cases}
        \displaystyle\int_0^{\rho^-(\aa)} \sqrt{2\wf(s)}\,\dd s,
        & \rho^-(\aa)\le 1,\\[1em]
        \dfrac{\sigma}{2},
        & \rho(\aa)\ge 1,\\[1em]
        \displaystyle
        \sigma-\int_0^{\rho^+(\aa)} \sqrt{2\wf(s)}\,\dd s,
        & \rho^+(\aa)\le 1.
    \end{cases}
\end{equation}
The three definitions agree on their common boundaries, since
\[
    \int_0^1\sqrt{2\wf(s)}\,\dd s=\frac{\sigma}{2}.
\]

\begin{lemma}\label{lemma-quasi-distance-function}
	The function \(\dwf\) is globally Lipschitz and satisfies
	\begin{equation}\label{old-differ-ineq}
		\norm{D\dwf(\aa)}\le\sqrt{2\wff(\aa)}
		\quad\text{for a.e. }\aa\in\Mn .
	\end{equation}
\end{lemma}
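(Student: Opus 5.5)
The plan is to write \(\dwf=g\circ(\rho^+,\rho^-)\) piecewise and reduce everything to the Lipschitz property of the distance functions \(\rho^\pm\).

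\textbf{Step 1: regularity of \(\rho^\pm\).} Since \(\onpm\) are compact, \(\rho^\pm\) are distance functions to closed sets. They are therefore \(1\)-Lipschitz, and by Rademacher's theorem they are differentiable a.e. with \(\norm{D\rho^\pm}\le1\) wherever they are differentiable.

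\textbf{Step 2: the profile \(G(r):=\int_0^{r}\sqrt{2\wf(s)}\,\dd s\).} This function is \(C^1\) on \([0,1]\) with \(G'(r)=\sqrt{2\wf(r)}=\tfrac{1}{\sqrt2}r(2-r)\le \tfrac1{\sqrt2}\).

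\textbf{Step 3: the three regions.} Set
\[
U^-=\{\rho^-<1\},\qquad U^+=\{\rho^+<1\},\qquad U^0=\{\rho>1\}.
\]
First, \(U^+\) and \(U^-\) are disjoint, since \(\min_{\onp\times\onn}\norm{\aa-\bb}=2\) by Lemma \ref{minimal-equi}, so \(\rho^++\rho^-\ge2\) by the triangle inequality. Hence \(\dwf\) is well defined, and its pieces match on the common boundaries as already noted.

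On each piece, \(\dwf\) is a composition of a \(C^1\) function with a Lipschitz function:
\begin{itemize}
\item on \(U^-\), \(\dwf=G(\rho^-)\);
\item on \(U^+\), \(\dwf=\sigma-G(\rho^+)\);
\item on \(U^0\), \(\dwf\) is constant.
\end{itemize}
By the chain rule for Lipschitz functions, for a.e. \(\aa\in U^\mp\),
\[
\norm{D\dwf(\aa)}=G'(\rho^\mp(\aa))\,\norm{D\rho^\mp(\aa)}\le\sqrt{2\wf(\rho^\mp(\aa))}.
\]
On \(U^-\) we have \(\rho=\rho^-\) because \(\rho^+\ge 2-\rho^->1>\rho^-\), and symmetrically on \(U^+\). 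Thus the right-hand side equals \(\sqrt{2\wff(\aa)}\). On \(U^0\) the gradient vanishes, so the inequality holds trivially.

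\textbf{Step 4: global Lipschitz continuity.} \(\dwf\) is continuous, and on each closed region it is \(\tfrac1{\sqrt2}\)-Lipschitz. For two arbitrary points, restrict to the segment between them and split it at the finitely many region crossings. Alternatively, note that \(\dwf=h(\rho^+,\rho^-)\), where \(h\) is a clamp-type Lipschitz function. Either way \(\dwf\) is globally Lipschitz with constant at most \(\tfrac1{\sqrt2}\).

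\textbf{Step 5: the level sets.} The remaining set is the union of the level sets \(\{\rho^-=1\}\cup\{\rho^+=1\}\). This set need not be null a priori. To handle it, use the fact that for a Lipschitz function \(u\), \(Du=0\) a.e. on any level set \(\{u=c\}\). Applying this to \(\dwf\) on \(\{\dwf=\sigma/2\}\), which contains these level sets, gives \(D\dwf=0\) a.e. there, so the bound holds trivially. I expect this measure-theoretic bookkeeping at the interfaces between the regions to be the only delicate point; the rest is direct.
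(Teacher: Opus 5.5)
Your proof is correct and follows essentially the paper's route: the gradient bound comes from the chain rule applied to $G\circ\rho^{\mp}$ on the open pieces, and global Lipschitz continuity comes from gluing the pieces, which the paper does by direct two-point estimates across regions. You are more careful than the paper in checking that $\rho=\rho^{\mp}$ there via $\rho^++\rho^-\ge2$ and in disposing of the level sets $\{\rho^\pm=1\}$. The only loose claim is the ``finitely many region crossings'' along a segment, which you do not justify. It is not needed, because your clamp alternative, written out as $\dwf=\tfrac{\sigma}{2}+G(\min\{\rho^-,1\})-G(\min\{\rho^+,1\})$ (valid because $\rho^++\rho^-\ge2$), gives Lipschitz continuity at once.
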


\begin{proof}
	The only point requiring comment is the behavior across the transition region where \(\rho(\aa)\ge1\).  For instance, if \(\rho^-(\aa)\le1\) and \(\rho(\bb)\ge1\), then
	\[
		\begin{aligned}
			\abs{\dwf(\bb)-\dwf(\aa)}
			 & =\left|\frac{\sigma}{2}
			-\int_0^{\rho^-(\aa)}\sqrt{2\wf(r)}\,\dd r\right| \\
			 & \le \int_{\rho^-(\aa)}^1\sqrt{2\wf(r)}\,\dd r
			\le \max_{0\le r\le1}\sqrt{2\wf(r)}\,\norm{\aa-\bb}.
		\end{aligned}
	\]
	The other cases are analogous, and hence \(\dwf\) is globally Lipschitz.  The differential inequality follows from the chain rule for Lipschitz functions, the identity \(\norm{D\rho}=1\) a.e., and \eqref{Ftilde-le-F}.
\end{proof}

\subsection{The mollified quasi-distance function}\label{subsec:mollified-quasi-distance}

The function \(\dwf\) introduced in \eqref{dwf-def} is globally Lipschitz, but it is not smooth across the cut locus of the two wells and across the set where the two components are equidistant.  In the modulated-energy argument we need to apply the chain rule to
\[
	\pe=\dwfe(\aae),
\]
and we also need a commutator identity compatible with the Chen--Shatah reformulation.  This is the reason for introducing a carefully chosen mollification of \(\dwf\).

A point that is important for the matrix-valued problem is that \(\dwf\) distinguishes the two components \(\onn\) and \(\onp\).  Therefore \(\dwf\) is not invariant under the full left-right action of \(\on\times\on\), since multiplication by an orthogonal matrix of negative determinant exchanges \(\onp\) and \(\onn\).   However, full \(\on\times\on\)-invariance is not needed.  To derive the
commutation laws used below, it is enough that the regularized quasi-distance be
invariant under the proper left-right action of \(\onp\times\onp\).

\begin{lemma}[Proper-orthogonal invariance of \(\dwf\)]\label{lemma:proper-invariance-dwf}
	For every \(\qq_1,\qq_2\in\onp\) and every \(\aa\in\Mn\),
	\begin{equation}\label{proper-invariance-dwf}
		\dwf(\qq_1\aa\qq_2^\top)=\dwf(\aa),
		\qquad
		\wff(\qq_1\aa\qq_2^\top)=\wff(\aa).
	\end{equation}
\end{lemma}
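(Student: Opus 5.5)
The plan is to reduce the statement to the invariance of the two distance functions \(\rho^\pm\), because \(\dwf\) and \(\wff\) depend on \(\aa\) only through \(\rho^+(\aa)\) and \(\rho^-(\aa)\). By \eqref{rho-def}, \(\rho=\min\{\rho^+,\rho^-\}\). By \eqref{ftilde-def}–\eqref{dwf-def}, both \(\wff(\aa)=\wf(\rho(\aa))\) and the case distinction together with the formulas defining \(\dwf(\aa)\) are expressed entirely through \(\rho^\pm(\aa)\). So it suffices to prove
\[
	\rho^\pm(\qq_1\aa\qq_2^\top)=\rho^\pm(\aa)
	\qquad\text{for all }\qq_1,\qq_2\in\onp,\ \aa\in\Mn .
\]
Once this holds, \(\rho\) is invariant. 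The case determining \(\dwf\) is then the same for \(\aa\) and for \(\qq_1\aa\qq_2^\top\), the value assigned in each case coincides, and both identities in \eqref{proper-invariance-dwf} follow at once.

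To prove the invariance of \(\rho^\pm\), I will use two facts.

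\begin{enumerate}[label=\textup{(\alph*)},leftmargin=2.5em]
	\item The Frobenius norm is invariant under left and right multiplication by orthogonal matrices:
	\[
		\norm{\qq_1\xx\qq_2^\top}^2=\tr(\qq_2\xx^\top\qq_1^\top\qq_1\xx\qq_2^\top)=\tr(\xx^\top\xx)=\norm{\xx}^2 .
	\]
	\item The map \(\bb\mapsto\qq_1\bb\qq_2^\top\) is a bijection of \(\onpm\) onto itself. It clearly preserves \(\on\), and \(\det(\qq_1\bb\qq_2^\top)=\det\qq_1\det\bb\det\qq_2=\det\bb\) because \(\det\qq_1=\det\qq_2=1\). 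Its inverse \(\bb\mapsto\qq_1^\top\bb\qq_2\) has the same form, so the map is onto.
\end{enumerate}

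With these, the computation is
\[
	\rho^\pm(\qq_1\aa\qq_2^\top)
	=\min_{\bb\in\onpm}\norm{\qq_1\aa\qq_2^\top-\bb}
	=\min_{\bb'\in\onpm}\norm{\qq_1(\aa-\bb')\qq_2^\top}
	=\rho^\pm(\aa),
\]
where the second step substitutes \(\bb=\qq_1\bb'\qq_2^\top\) using (b), and the last step uses (a).

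I do not expect a genuine obstacle. The only point deserving a remark is that the determinant condition \(\qq_1,\qq_2\in\onp\) is essential. If exactly one of \(\qq_1,\qq_2\) had negative determinant, the substitution in (b) would exchange \(\onp\) and \(\onn\), hence swap \(\rho^+\) and \(\rho^-\). In that case \(\wff\) would stay invariant, since it depends only on the symmetric quantity \(\rho\), but \(\dwf\) would be sent to \(\sigma-\dwf\). This explains why the lemma is stated only for the proper group.
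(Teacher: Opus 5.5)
Your proposal is correct and follows the paper's argument: both reduce the claim to $\rho^\pm(\qq_1\aa\qq_2^\top)=\rho^\pm(\aa)$, using the orthogonal invariance of the Frobenius norm and the fact that $\qq_1^\top\onpm\qq_2=\onpm$ when $\det\qq_1=\det\qq_2=1$. Your closing remark is also accurate: an improper factor swaps $\rho^+$ and $\rho^-$, which leaves $\wff$ invariant and sends $\dwf$ to $\sigma-\dwf$.
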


\begin{proof}
	Because the Frobenius norm is invariant under left and right orthogonal multiplication,
	\[
		\dist(\qq_1\aa\qq_2^\top,\onpm)
		=\dist(\aa,\qq_1^\top\onpm\qq_2).
	\]
	Since \(\det\qq_1=\det\qq_2=1\), the set \(\qq_1^\top\onpm\qq_2\) is exactly \(\onpm\).  Hence \(\rho^\pm(\qq_1\aa\qq_2^\top)=\rho^\pm(\aa)\).  The definitions \eqref{dwf-def} and \eqref{Ftilde-def} then give \eqref{proper-invariance-dwf}.
\end{proof}

Choose a non-negative radial mollifier \(\phi\in C_c^\infty(\Mn)\) satisfying
\begin{equation}\label{mollifier-normalization}
	\phi(\bb)=\widehat\phi(\norm{\bb}),
	\qquad
	\spt\phi\subset B_1^{\Mn},
	\qquad
	\int_{\Mn}\phi(\bb)\,\dd\bb=1 .
\end{equation}
In particular, \(\phi\) is invariant under the full left-right orthogonal action, and therefore also under \(\onp\times\onp\).  Choose \(K>3\) sufficiently large; for definiteness we take
\begin{equation}\label{K-choice}
    K=5 .
\end{equation}
This ensures that the mollification error and the additional constant
\(\ee^{K-1}\) in \(F_\ee\) are of higher order than the main
\(\mathcal O(\ee)\) modulated-energy error. Define
\begin{equation}\label{mollifier-def}
	\phi_\ee(\bb):=\ee^{-n^2K}\phi(\ee^{-K}\bb),
	\qquad
	\spt\phi_\ee\subset B_{\ee^K}^{\Mn},
	\qquad
	\int_{\Mn}\phi_\ee\,\dd\bb=1,
\end{equation}
and set
\begin{equation}\label{mollified-quasi}
	\dwfe(\aa):=(\phi_\ee*\dwf)(\aa)
	=\int_{\Mn} \phi_\ee(\bb)\dwf(\aa-\bb)\,\dd\bb .
\end{equation}
The following elementary estimates will be used repeatedly.

\begin{lemma}[Basic properties of the mollification]\label{lemma:mollification-basic}
	There exists a constant \(C>0\), independent of \(\ee\), such that
	\begin{align}
		0\le \dwfe\le \sigma,
		\qquad
		\norm{\dwfe-\dwf}_{L^\infty(\Mn)}  \le C\ee^K,\label{mollification-error-Linfty} 	
	\end{align}
    \begin{align}
        \norm{D\dwfe}_{L^\infty(\Mn)}      \le C.\label{mollification-gradient-bound}
    \end{align}
	Moreover, for every \(\qq_1,\qq_2\in\onp\),
	\begin{equation}\label{proper-invariance-dwfe}
		\dwfe(\qq_1\aa\qq_2^\top)=\dwfe(\aa),
		\qquad \aa\in\Mn .
	\end{equation}
\end{lemma}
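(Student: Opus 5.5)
The three assertions of Lemma \ref{lemma:mollification-basic} follow from standard properties of convolution with a nonnegative, normalized kernel, combined with Lemma \ref{lemma-quasi-distance-function} and Lemma \ref{lemma:proper-invariance-dwf}. I will verify them in the order stated. Throughout, let \(L\) denote the global Lipschitz constant of \(\dwf\) from Lemma \ref{lemma-quasi-distance-function}; by \eqref{old-differ-ineq} one may take \(L=\max_{0\le r\le1}\sqrt{2\wf(r)}=\sqrt2/4\), although only finiteness matters.

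\emph{Bounds on \(\dwfe\).} The definition \eqref{dwf-def} gives \(0\le\dwf\le\sigma\). Indeed, in the first case the integral lies in \([0,\sigma/2]\), and in the third case \(\sigma\) minus such an integral lies in \([\sigma/2,\sigma]\). Since \(\phi_\ee\ge0\) and \(\int\phi_\ee=1\), the convolution \(\dwfe(\aa)\) is an average of values of \(\dwf\), so \(0\le\dwfe\le\sigma\). For the approximation error I write
\[
\dwfe(\aa)-\dwf(\aa)=\int_{\Mn}\phi_\ee(\bb)\bigl(\dwf(\aa-\bb)-\dwf(\aa)\bigr)\,\dd\bb .
\]
On \(\spt\phi_\ee\subset B^{\Mn}_{\ee^K}\) the integrand is bounded by \(L\norm{\bb}\le L\ee^K\). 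This gives \eqref{mollification-error-Linfty} with \(C=L\).

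\emph{Gradient bound.} The function \(\dwfe\) is smooth because \(\phi_\ee\in C_c^\infty\). Differentiating under the integral would give \(D\phi_\ee\), whose norm blows up like \(\ee^{-K}\), so I instead estimate difference quotients. The Lipschitz bound carries over directly:
\[
\abs{\dwfe(\aa)-\dwfe(\aa')}\le\int\phi_\ee(\bb)\abs{\dwf(\aa-\bb)-\dwf(\aa'-\bb)}\,\dd\bb\le L\norm{\aa-\aa'}.
\]
Hence \(\norm{D\dwfe}\le L\) everywhere, which is \eqref{mollification-gradient-bound}. Equivalently, \(D\dwfe=\phi_\ee*D\dwf\), using the a.e. gradient of the Lipschitz function \(\dwf\).

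\emph{Invariance.} Fix \(\qq_1,\qq_2\in\onp\). I change variables \(\bb=\qq_1\bb'\qq_2^\top\). The map \(\bb'\mapsto\qq_1\bb'\qq_2^\top\) is a linear isometry of \(\Mn\) in the Frobenius norm, so its Jacobian has absolute value \(1\). Radiality in \eqref{mollifier-normalization} gives \(\phi_\ee(\qq_1\bb'\qq_2^\top)=\phi_\ee(\bb')\). Then
\[
\dwfe(\qq_1\aa\qq_2^\top)=\int\phi_\ee(\bb')\,\dwf\bigl(\qq_1(\aa-\bb')\qq_2^\top\bigr)\,\dd\bb'=\int\phi_\ee(\bb')\,\dwf(\aa-\bb')\,\dd\bb',
\]
where the last equality uses \eqref{proper-invariance-dwf}. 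This proves \eqref{proper-invariance-dwfe}.

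None of these steps is a genuine obstacle. The only point requiring care is that the invariance argument uses just \(\onp\times\onp\)-invariance of \(\dwf\), while the mollifier is fully \(\on\times\on\)-invariant. This mismatch is harmless, since only the weaker invariance is claimed.
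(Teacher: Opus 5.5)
Your proof is correct and follows the paper's argument essentially step by step. The bounds \(0\le\dwfe\le\sigma\) come from averaging against the normalized nonnegative kernel, and the \(C\ee^K\) error from the Lipschitz bound on \(\spt\phi_\ee\subset B^{\Mn}_{\ee^K}\). The gradient bound comes from passing the Lipschitz constant to the convolution (equivalently \(D\dwfe=\phi_\ee*D\dwf\)), and the invariance from the isometric change of variables \(\bb=\qq_1\bb'\qq_2^\top\) together with the radial mollifier and Lemma~\ref{lemma:proper-invariance-dwf}.

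One harmless slip: \(\max_{0\le r\le1}\sqrt{2\wf(r)}=\sqrt{2\wf(1)}=\sqrt2/2\), not \(\sqrt2/4\). As you note, only the finiteness of \(L\) is used, so this does not affect the argument.
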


\begin{proof}
	The bounds \(0\le \dwfe\le \sigma\) follow from the same bounds for \(\dwf\).  Since \(\dwf\) is globally Lipschitz and \(\phi_\ee\) is supported in \(B_{\ee^K}^{\Mn}\),
	\[
		\begin{aligned}
			\abs{\dwfe(\aa)-\dwf(\aa)}
			 & \le \int_{\Mn}\phi_\ee(\bb)
			\abs{\dwf(\aa-\bb)-\dwf(\aa)}\,\dd\bb \\
			 & \le \operatorname{Lip}(\dwf)
			\int_{\Mn}\phi_\ee(\bb)\norm{\bb}\,\dd\bb
			\le C\ee^K .
		\end{aligned}
	\]
	The gradient bound follows either by differentiating the convolution in the sense of distributions:
	\[
		\norm{D\dwfe}_{L^\infty}\le \operatorname{Lip}(\dwf).
	\]
	Finally, for \(\qq_1,\qq_2\in\onp\), using the change of variables \(\cc=\qq_1^\top\bb\qq_2\), the invariance of Lebesgue measure on \(\Mn\), the radial invariance of \(\phi_\ee\), and Lemma \ref{lemma:proper-invariance-dwf}, we get
	\[
		\begin{aligned}
			\dwfe(\qq_1\aa\qq_2^\top)
			 & =\int_{\Mn}\phi_\ee(\bb)\dwf(\qq_1\aa\qq_2^\top-\bb)\,\dd\bb \\
			 & =\int_{\Mn}\phi_\ee(\qq_1\cc\qq_2^\top)
			\dwf\bigl(\qq_1(\aa-\cc)\qq_2^\top\bigr)\,\dd\cc                \\
			 & =\int_{\Mn}\phi_\ee(\cc)\dwf(\aa-\cc)\,\dd\cc
			=\dwfe(\aa).
		\end{aligned}
	\]
\end{proof}

\begin{lemma}[Commutation laws]\label{commutator-law}
	The mollified quasi-distance \(\dwfe\) is smooth and satisfies, for every \(\aa\in\Mn\),
	\begin{equation}\label{commutator-identity}
		[D\dwfe(\aa),\aa]_L=0,
		\qquad
		[\aa,D\dwfe(\aa)]_R=0 .
	\end{equation}
\end{lemma}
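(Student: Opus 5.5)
Smoothness is immediate, since \(\dwfe=\phi_\ee*\dwf\) is the convolution of a bounded Lipschitz function with a \(C_c^\infty\) kernel; all derivatives fall on \(\phi_\ee\). The commutation laws should then follow by differentiating the invariance \eqref{proper-invariance-dwfe} of Lemma \ref{lemma:mollification-basic} along one-parameter subgroups of \(\onp\times\onp\).

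For the left law, fix \(\xx\in\An\) and set \(\qq_1(s)=e^{s\xx}\in\onp\) (it lies in \(\onp\) since \(\det e^{s\xx}=e^{s\tr\xx}=1\)), with \(\qq_2=\ii\). By \eqref{proper-invariance-dwfe}, \(s\mapsto\dwfe(e^{s\xx}\aa)\) is constant. Differentiating at \(s=0\) by the chain rule (valid now that \(\dwfe\) is smooth) gives
\[
0=D\dwfe(\aa):\xx\aa=\tr\bigl(D\dwfe(\aa)^\top\xx\aa\bigr)=\tr\bigl(\aa D\dwfe(\aa)^\top\xx\bigr)=\bigl(D\dwfe(\aa)\aa^\top\bigr):\xx .
\]
Since this holds for all \(\xx\in\An\), the skew part of \(D\dwfe(\aa)\aa^\top\) vanishes. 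This skew part is \(\tfrac12\bigl(D\dwfe(\aa)\aa^\top-\aa D\dwfe(\aa)^\top\bigr)=\tfrac12[D\dwfe(\aa),\aa]_L\), which is the first identity.

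For the right law, take \(\qq_1=\ii\) and \(\qq_2(s)=e^{s\xx}\), so that \(\qq_2^\top=e^{-s\xx}\). Differentiation gives \(0=-D\dwfe(\aa):\aa\xx=-\bigl(\aa^\top D\dwfe(\aa)\bigr):\xx\) for all \(\xx\in\An\). Hence the skew part \(\tfrac12\bigl(\aa^\top D\dwfe(\aa)-D\dwfe(\aa)^\top\aa\bigr)=\tfrac12[\aa,D\dwfe(\aa)]_R\) vanishes.

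The only point needing care is the justification that we may differentiate. This is where the mollification matters: \(\dwf\) itself is merely Lipschitz, and its invariance gives the identities only a.e. Smoothness of \(\dwfe\) makes the argument hold at every \(\aa\). The algebraic steps are just the cyclicity of the trace together with the orthogonality of \(\An\) to symmetric matrices under the Frobenius pairing.
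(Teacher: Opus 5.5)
Your proof is correct and follows the paper's argument: you differentiate the $\onp\times\onp$-invariance along $e^{s\xx}$ with $\xx\in\An$, and use trace cyclicity to conclude that $D\dwfe(\aa)\aa^\top$ and $\aa^\top D\dwfe(\aa)$ are symmetric. The only differences are cosmetic. On the right you use $\qq_2=e^{s\xx}$, which acts as $\aa e^{-s\xx}$ and only flips a sign. You also make explicit the smoothness of the convolution, which the paper leaves implicit.
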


\begin{proof}
	Let \(\xx\in\An\).  Since \(e^{t\xx}\in\onp\), the left invariance in \eqref{proper-invariance-dwfe} gives
	\[
		\dwfe(e^{t\xx}\aa)=\dwfe(\aa).
	\]
	Differentiating at \(t=0\),
	\[
		0=D\dwfe(\aa):(\xx\aa)
		=\tr\bigl(D\dwfe(\aa)^\top\xx\aa\bigr)
		=\tr\bigl(\aa D\dwfe(\aa)^\top\xx\bigr).
	\]
	This holds for every skew-symmetric \(\xx\).  Hence \(\aa D\dwfe(\aa)^\top\) is symmetric, i.e.
	\[
		\aa D\dwfe(\aa)^\top=D\dwfe(\aa)\aa^\top,
	\]
	which is precisely \([D\dwfe(\aa),\aa]_L=0\).

	Similarly, the right invariance gives \(\dwfe(\aa e^{t\xx})=\dwfe(\aa)\).  Differentiating at \(t=0\),
	\[
		0=D\dwfe(\aa):(\aa\xx)
		=\tr\bigl(D\dwfe(\aa)^\top\aa\xx\bigr).
	\]
	Thus \(D\dwfe(\aa)^\top\aa\) is symmetric, or equivalently
	\[
		D\dwfe(\aa)^\top\aa=\aa^\top D\dwfe(\aa),
	\]
	which is \([\aa,D\dwfe(\aa)]_R=0\).
\end{proof}

The mollification also preserves the basic differential inequality
\eqref{old-differ-ineq}, up to a small error.  

\begin{lemma}[Differential inequality]\label{lemma-differential-inequality}
	For every \(M>0\), there exists \(\ee_0=\ee_0(M)>0\) such that
	\begin{equation}\label{differential-inequality1}
		\norm{D\dwfe(\aa)}\le\sqrt{2F_\ee(\aa)},
		\qquad
		F_\ee(\aa):=F(\aa)+\ee^{K-1},
	\end{equation}
	whenever \(\norm{\aa}\le M\), \(\aa\in\Mn\), and \(0<\ee<\ee_0\).
\end{lemma}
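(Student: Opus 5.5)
Since \(\dwfe=\phi_\ee*\dwf\) and \(\dwf\) is Lipschitz, we have \(D\dwfe=\phi_\ee*D\dwf\) pointwise. The plan is to estimate this convolution using \eqref{old-differ-ineq} and then compare \(\wff\) at nearby points. By Minkowski's inequality and \eqref{old-differ-ineq},
\[
\norm{D\dwfe(\aa)}\le\int_{\Mn}\phi_\ee(\bb)\norm{D\dwf(\aa-\bb)}\,\dd\bb
\le\sup_{\norm{\bb}\le\ee^K}\sqrt{2\wff(\aa-\bb)} .
\]
It therefore suffices to show that, for \(\norm{\aa}\le M\), \(\norm{\bb}\le\ee^K\) and \(\ee\) small,
\[
\wff(\aa-\bb)\le F(\aa)+\ee^{K-1}.
\]

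The next step is Lipschitz control of \(\wff\). The function \(\rho\) is 1-Lipschitz, being a minimum of distance functions. The profile \(\wf\) is Lipschitz on \([0,\infty)\): on \([0,1]\) its derivative is \(\tfrac12 r(2-r)(2-2r)\), which is bounded by \(1/2\), and \(\wf\) is constant afterwards. Hence \(\wff\) is globally Lipschitz with some constant \(L\), and
\[
\wff(\aa-\bb)\le\wff(\aa)+L\ee^K\le F(\aa)+L\ee^K ,
\]
where the last step uses \eqref{Ftilde-le-F}. Since \(K-1<K\), we have \(L\ee^K\le\ee^{K-1}\) as soon as \(\ee\le 1/L\). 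This gives \eqref{differential-inequality1}, apparently without even needing the bound \(\norm{\aa}\le M\). The hypothesis \(\norm{\aa}\le M\) would be needed in a variant comparing directly with \(F\), whose Lipschitz constant on \(B_{M+1}\) grows like \(M^3\); that variant gives \(\ee_0\sim M^{-3}\).

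The one point needing care, which I expect to be the main (though mild) obstacle, is justifying the first display. The a.e. identity \(D\dwfe=\phi_\ee*D\dwf\) holds because \(\dwf\) is Lipschitz, hence in \(W^{1,\infty}_{\loc}\), with weak gradient equal to its a.e. gradient (Rademacher). Because \(\dwfe\) is smooth, this identity in fact holds at every point. The inequality \eqref{old-differ-ineq} is only asserted a.e., but the convolution integral ignores null sets, so the supremum may be taken as an essential supremum over the ball and the argument is unaffected. If one wants \(\ee_0\) to depend on \(M\) as stated, one simply takes \(\ee_0=\min\{1,1/L\}\), which is in fact uniform in \(M\).
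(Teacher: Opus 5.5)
Your argument is correct. It shares the paper's skeleton: write $D\dwfe=\phi_\ee*D\dwf$, bound it pointwise via \eqref{old-differ-ineq}, compare the potential at $\aa-\bb$ and at $\aa$ with an $O(\ee^K)$ error, and absorb that error into $\ee^{K-1}$. The difference is which function you compare.

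The paper first replaces $\wff$ by $F$ inside the average and uses Jensen to pull the square root out. It then uses the local Lipschitz continuity of the smooth potential $F$ on a bounded set; this is the source of the constant $C_M$ and of the dependence $\ee_0=\ee_0(M)$. You instead perturb $\wff=\wf\circ\rho$ itself. This function is globally Lipschitz, because $\rho$ is $1$-Lipschitz and $\wf'$ is bounded on $[0,1]$ and vanishes beyond $1$. You use $\wff\le F$ only afterwards, at the single point $\aa$.

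What your route buys is that $\ee_0$ is independent of $M$, so \eqref{differential-inequality1} holds on all of $\Mn$ once $\ee$ is small. In particular, the non-negativity of the modulated energy and the coercivity lemma would no longer need the maximum-principle $L^\infty$ bound. That bound is still needed later, for the commutator estimates and the identification of nonlinear limits.

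The paper's route uses only the smoothness of $F$ and no information about the truncated profile $\wf$, so it would survive a change of comparison potential. Yours exploits that specific profile. Replacing Jensen by a supremum over $\norm{\bb}\le\ee^K$ makes no difference.
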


\begin{proof}
	Because \(\dwf\) is Lipschitz, \(D\dwf\) exists a.e. and
	\[
		D\dwfe(\aa)=\int_{\Mn}\phi_\ee(\bb)D\dwf(\aa-\bb)\,\dd\bb .
	\]
	Using \eqref{old-differ-ineq} and Jensen's inequality,
	\[
		\begin{aligned}
			\norm{D\dwfe(\aa)}
			 & \le \int_{\Mn}\phi_\ee(\bb)\norm{D\dwf(\aa-\bb)}\,\dd\bb             \\
			 & \le \left(\int_{\Mn}\phi_\ee(\bb)\,2F(\aa-\bb)\,\dd\bb\right)^{1/2}.
		\end{aligned}
	\]
	If \(\norm{\aa}\le M\) and \(\norm{\bb}\le\ee^K\), then \(\aa-\bb\) remains in a fixed bounded set.  Since \(F\) is smooth, it is Lipschitz on this set, and therefore
	\[
		F(\aa-\bb)\le F(\aa)+C_M\norm{\bb}.
	\]
	Consequently,
	\[
		\norm{D\dwfe(\aa)}^2
		\le 2F(\aa)+C_M\int_{\Mn}\phi_\ee(\bb)\norm{\bb}\,\dd\bb
		\le 2F(\aa)+C_M\ee^K.
	\]
	Taking \(\ee_0(M)>0\) sufficiently small so that \(C_M\ee^K\le2\ee^{K-1}\) for \(0<\ee<\ee_0(M)\), we obtain \eqref{differential-inequality1}.
\end{proof}

\section{The modulated energy}\label{section:modulated-energy}

We extend the geometric quantities associated with \(\Gamma_t\) to a tubular neighborhood.  Let \(\varphi:\br\to[0,\infty)\) be an even smooth cut-off such that
\begin{equation}\label{psi-cutoff}
	\varphi(s)>0\text{ for }\abs{s}<1,
	\qquad
	\varphi(s)=0\text{ for }\abs{s}\ge1,
	\qquad
	1-4s^2\le\varphi(s)\le1-\frac12s^2\text{ for }\abs{s}<\frac12 .
\end{equation}
Define
\begin{equation}\label{xi-H-def}
	\xi(x,t)=\varphi\left(\frac{\dgt(x,t)}{\delta_\Gamma}\right)\nabla\dgt(x,t),
\end{equation}
and
\begin{equation}\label{H-def}
	H(x,t)=-\varphi_0(x,t)\Delta\dgt(P_{\Gamma_t}(x),t)\nabla\dgt(x,t),
\end{equation}
where \(\varphi_0\in C_c^\infty(\Gamma(\delta_\Gamma))\) and \(\varphi_0=1\) in \(\Gamma(\delta_\Gamma/2)\).  As in \cite{Fischer2020,Liu2025}, there exists \(C(\Gamma)>0\) such that
\begin{subequations}\label{eq-geometric}
	\begin{align}
		\div\xi+H\cdot\xi                                  & =\mathcal{O}(\dgt),\label{geom1} \\
		\abs{\p_t\xi+(H\cdot\nabla)\xi+(\nabla H)^\top\xi} & =\mathcal{O}(\dgt),\label{geom2} \\
		\p_t\abs{\xi}^2+(H\cdot\nabla)\abs{\xi}^2          & =\mathcal{O}(\dgt),\label{geom3} \\
		\norm{\nabla\xi}+\abs{H}+\norm{\nabla H}           & \le C(\Gamma),\label{geom4}
	\end{align}
\end{subequations}
in \(\overline\Omega\times[0,T]\), and
\begin{equation}\label{xi-H-boundary-zero}
	\xi=H=0\quad\text{on }\p\Omega\times[0,T].
\end{equation}

For the phase-field solution \(\aae\), define
\begin{equation}\label{psi-eps-def}
	\pe=\dwfe(\aae).
\end{equation}
The phase-field normal and mean-curvature vector are
\begin{equation}\label{normal-def}
	\nne=
	\begin{cases}
		\dfrac{\nabla\pe}{\abs{\nabla\pe}}, & \abs{\nabla\pe}\ne0, \\[0.7em]
		0,                                  & \abs{\nabla\pe}=0,
	\end{cases}
\end{equation}
and, componentwise for \(1\le i\le d\),
\begin{equation}\label{Heps-def}
	(\hhe)_i=
	\begin{cases}
		-\left(\ee\Delta\aae-\dfrac1\ee D F(\aae)\right):
		\dfrac{\p_i\aae}{\norm{\nabla\aae}}, & \norm{\nabla\aae}\ne0, \\[0.9em]
		0,                                   & \norm{\nabla\aae}=0.
	\end{cases}
\end{equation}
Using \eqref{allen-cahn-equation}, we have the vector identity
\begin{equation}\label{Heps-identity}
	\hhe\norm{\nabla\aae}=-\ee\,\p_t\aae:\nabla\aae .
\end{equation}

For \(0\le i\le d\), with \(\p_0=\p_t\), define the projection of \(\p_i\aae\) onto the direction of \(D\dwfe(\aae)\) by
\begin{equation}\label{projection-def}
	\Pi_{\aae}(\p_i\aae)=
	\begin{cases}
		\left(\p_i\aae:\dfrac{D\dwfe(\aae)}{\norm{D\dwfe(\aae)}}\right)
		\dfrac{D\dwfe(\aae)}{\norm{D\dwfe(\aae)}},
		   & \norm{D\dwfe(\aae)}\ne0, \\[1em]
		0, & \norm{D\dwfe(\aae)}=0.
	\end{cases}
\end{equation}
The notation \(\Pi_{\aae}\nabla\aae\) is understood componentwise. 

By Lemma \ref{commutator-law}, for \(0\le i\le d\),
\begin{subequations}\label{comm-invariance}
	\begin{align}
		[\p_i\aae,\aae]_L
		 & =[\p_i\aae-\Pi_{\aae}\p_i\aae,\aae]_L,
		\label{comm-left-invariance}              \\
		[\aae,\p_i\aae]_R
		 & =[\aae,\p_i\aae-\Pi_{\aae}\p_i\aae]_R.
		\label{comm-right-invariance}
	\end{align}
\end{subequations}
Moreover,
\begin{subequations}\label{orthogonality}
	\begin{align}
		\norm{\Pi_{\aae}\nabla\aae}\,\norm{D\dwfe(\aae)} & =\abs{\nabla\pe},\label{orthogonality-1} \\
		(\p_i\aae-\Pi_{\aae}\p_i\aae):\Pi_{\aae}\p_i\aae & =0.
		\label{orthogonality-2}
	\end{align}
\end{subequations}

The modulated energy is
\begin{equation}\label{relative-definition}
	E_\ee[\aae\mid\Gamma](t)
	=\int_\Omega
	\left(
	\frac\ee2\norm{\nabla\aae}^2
	+\frac1\ee F_\ee(\aae)
	-\xi\cdot\nabla\pe
	\right)\dx,
\end{equation}
where \(F_\ee=F+\ee^{K-1}\).  By Lemma \ref{lemma-differential-inequality} and \(\abs\xi\le1\), the integrand is non-negative after completing the square, and hence \(E_\ee[\aae\mid\Gamma](t)\ge0\).

\begin{lemma}[Coercivity of the modulated energy]\label{lemma-energy-coercivity}
	There exists \(C=C(\Gamma)>0\) such that, for every \(t\in[0,T]\),
	\begin{align}
		\int_\Omega
		\left(
		\frac\ee2\norm{\nabla\aae}^2
		+\frac1\ee F_\ee(\aae)
		+\abs{\nabla\pe}
		\right)
		\min\{\dgt^2,1\}\dx
		 & \le C E_\ee[\aae\mid\Gamma](t),
		\label{coercivity-distance}        \\
		\ee\int_\Omega\norm{\nabla\aae-\Pi_{\aae}\nabla\aae}^2\dx
		 & \le 2E_\ee[\aae\mid\Gamma](t).
		\label{coercivity-projection}
	\end{align}
\end{lemma}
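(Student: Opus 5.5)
The plan is to decompose the integrand of \(E_\ee[\aae\mid\Gamma]\) pointwise into a sum of non-negative pieces, and then to read off both inequalities from this decomposition. Since \(\nabla\pe=D\dwfe(\aae):\nabla\aae\), the orthogonality relations \eqref{orthogonality} give
\[
\norm{\nabla\aae}^2=\norm{\Pi_{\aae}\nabla\aae}^2+\norm{\nabla\aae-\Pi_{\aae}\nabla\aae}^2 .
\]
Then \eqref{orthogonality-1} together with Lemma \ref{lemma-differential-inequality} yields \(\abs{\nabla\pe}\le\norm{\Pi_{\aae}\nabla\aae}\sqrt{2F_\ee(\aae)}\). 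The \(L^\infty\) bound on \(\aae\) from the maximum principle (Remark \ref{rem:main-comments}(iv)) lets us use Lemma \ref{lemma-differential-inequality} with a fixed \(M\), for \(\ee<\ee_0(M)\). Writing \(\abs{\nabla\pe}-\xi\cdot\nabla\pe=(1-\abs\xi)\abs{\nabla\pe}+\abs\xi\,\abs{\nabla\pe}(1-\xi\cdot\nne)\), we obtain the pointwise decomposition
\[
\begin{aligned}
&\frac\ee2\norm{\nabla\aae}^2+\frac1\ee F_\ee(\aae)-\xi\cdot\nabla\pe\\
&\quad=\frac\ee2\norm{\nabla\aae-\Pi_{\aae}\nabla\aae}^2
+\Bigl(\frac\ee2\norm{\Pi_{\aae}\nabla\aae}^2+\frac1\ee F_\ee-\abs{\nabla\pe}\Bigr)
+(1-\abs\xi)\abs{\nabla\pe}+\abs\xi\,\abs{\nabla\pe}\,(1-\xi\cdot\nne).
\end{aligned}
\]
Each bracket is non-negative. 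For the second one this follows from Young's inequality, \(\frac\ee2 a^2+\frac1\ee F_\ee\ge a\sqrt{2F_\ee}\ge\abs{\nabla\pe}\) with \(a=\norm{\Pi_{\aae}\nabla\aae}\). The last two are non-negative because \(\abs\xi\le1\) and \(\abs{\nne}\le1\).

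The projection estimate \eqref{coercivity-projection} is then immediate. Integrate the decomposition and drop all terms except the first, which gives \(\frac\ee2\int\norm{\nabla\aae-\Pi\nabla\aae}^2\le E_\ee\). This is actually better than the stated constant \(2\).

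For \eqref{coercivity-distance}, the key weight is \(1-\abs\xi\). Using \(\varphi(s)\le1-\frac12s^2\) for \(\abs s<\frac12\), and \(\varphi<1\) away from \(0\) (shrinking to a compact set away from \(s=0\) and then continuity bounds \(1-\varphi\) below), we get \(1-\abs\xi\ge c\min\{\dgt^2,1\}\) with \(c=c(\delta_\Gamma)\). For this step we need to choose \(\varphi\) with \(\varphi<1\) for \(s\ne0\), which the stated bounds essentially force near \(0\). The bound on the \(\abs{\nabla\pe}\) term then follows from the third bracket. The energy terms are bounded using the identity
\[
\frac\ee2\norm{\nabla\aae}^2+\frac1\ee F_\ee=\text{(first two brackets)}+\abs{\nabla\pe}.
\]
Multiplying by \(\min\{\dgt^2,1\}\), the first two brackets are bounded by the integrand of \(E_\ee\), since \(\min\le1\). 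The remaining piece \(\abs{\nabla\pe}\min\{\dgt^2,1\}\) was just controlled by \(C E_\ee\). This gives \eqref{coercivity-distance} with \(C=1+2/c\), say.

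The main obstacle is not analytic but bookkeeping. First, we must make sure that Lemma \ref{lemma-differential-inequality} applies, which requires the uniform \(L^\infty\) bound and \(\ee\) small; for larger \(\ee\) the claim is vacuous or is handled by adjusting \(C\). Second, we need the quantitative lower bound \(1-\abs\xi\gtrsim\min\{\dgt^2,1\}\) outside the tubular neighborhood. There \(\xi=0\), so \(1-\abs\xi=1\). In the intermediate region \(\frac12\delta_\Gamma\le\abs{\dgt}<\delta_\Gamma\) we need \(\varphi\le1-c'\), which I would secure by imposing, or deducing, that \(\varphi<1\) away from \(0\).
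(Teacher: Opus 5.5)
Your argument is correct and is essentially the paper's proof. The Modica bound \(\abs{\nabla\pe}\le\norm{\Pi_{\aae}\nabla\aae}\sqrt{2F_\ee(\aae)}\) plus Young's inequality on the projected component gives \eqref{coercivity-projection}, and the weight \(1-\abs{\xi}\ge c\min\{\dgt^2,1\}\) gives \eqref{coercivity-distance}. You merely package these pointwise lower bounds as one decomposition, and you are more careful than the paper about needing \(\ee<\ee_0(M)\) and \(\varphi\le 1-c'\) on \([\tfrac12,1)\).

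There are two harmless slips. First, the last term of your decomposition should be \(\abs{\nabla\pe}\,(\abs{\xi}-\xi\cdot\nne)\): as written it differs from \(\abs{\nabla\pe}-\xi\cdot\nabla\pe\) by \((1-\abs{\xi})\,\xi\cdot\nabla\pe\), although only its non-negativity is used, which Cauchy--Schwarz gives either way. Second, \(\frac\ee2\int_\Omega\norm{\nabla\aae-\Pi_{\aae}\nabla\aae}^2\dx\le E_\ee\) is exactly the stated constant \(2\), not an improvement on it.
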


\begin{proof}
	We give the details because this estimate is used repeatedly.  Put
	\[
		e_\ee(\aae):=\frac\ee2\norm{\nabla\aae}^2
		+\frac1\ee F_\ee(\aae).
	\]
	By Lemma \ref{lemma-differential-inequality},
	\[
		\abs{\nabla\pe}
		=\abs{D\dwfe(\aae):\nabla\aae}
		\le \norm{D\dwfe(\aae)}\norm{\nabla\aae}
		\le \sqrt{2F_\ee(\aae)}\norm{\nabla\aae}
		\le e_\ee(\aae).
	\]
	Consequently
	\begin{equation}\label{basic-energy-gap}
		e_\ee(\aae)-\xi\cdot\nabla\pe
		\ge e_\ee(\aae)-\abs{\xi}\abs{\nabla\pe}
		\ge (1-\abs{\xi})e_\ee(\aae),
	\end{equation}
	where we used \(\abs{\nabla\pe}\le e_\ee(\aae)\) in the last step.  The construction of \(\xi\) gives
	\begin{equation}\label{xi-defect}
		1-\abs{\xi(x,t)}\ge c(\Gamma)\min\{\dgt(x,t)^2,1\}
		\qquad\text{in }\Omega\times[0,T].
	\end{equation}
	Indeed, near \(\Gamma_t\) this follows from the condition
	\(1-\varphi(s)\ge c s^2\), and away from the tubular neighborhood \(\xi=0\).  Combining
	\eqref{basic-energy-gap} and \eqref{xi-defect} gives the part of
	\eqref{coercivity-distance} containing \(e_\ee\).  The same argument, now using
	\[
		e_\ee(\aae)-\xi\cdot\nabla\pe
		\ge e_\ee(\aae)-\abs{\xi}\abs{\nabla\pe}
		\ge (1-\abs{\xi})\abs{\nabla\pe},
	\]
	gives the corresponding bound for \(\abs{\nabla\pe}\).

	It remains to prove \eqref{coercivity-projection}.  The projection is orthogonal in
	\(\Mn\); hence, componentwise,
	\[
		\norm{\nabla\aae}^2
		=\norm{\Pi_{\aae}\nabla\aae}^2
		+\norm{\nabla\aae-\Pi_{\aae}\nabla\aae}^2 .
	\]
	Moreover,
	\[
		\abs{\nabla\pe}
		=\abs{D\dwfe(\aae):\Pi_{\aae}\nabla\aae}
		\le \norm{D\dwfe(\aae)}\norm{\Pi_{\aae}\nabla\aae}
		\le \frac\ee2\norm{\Pi_{\aae}\nabla\aae}^2
		+\frac1\ee F_\ee(\aae).
	\]
	Therefore
	\[
		\begin{aligned}
			e_\ee(\aae)-\xi\cdot\nabla\pe
			 & \ge e_\ee(\aae)-\abs{\nabla\pe}                         \\
			 & \ge \frac\ee2\norm{\nabla\aae-\Pi_{\aae}\nabla\aae}^2 .
		\end{aligned}
	\]
	After integration over \(\Omega\), this yields \eqref{coercivity-projection}.
\end{proof}

\begin{proposition}[Modulated-energy inequality]\label{prop:modulated-energy-ineq}
	There exists \(C=C(\Gamma)>0\) such that
	\begin{equation}\label{relative-entropy-inequality}
\begin{aligned}
    \frac{\dd}{\dd t}E_\ee[\aae\mid\Gamma]
    &+
    \frac1{2\ee}\int_\Omega
    \norm{\ee\p_t\aae-(\div\xi)D\dwfe(\aae)}^2\dx
    \\
    &+
    \frac1{2\ee}\int_\Omega
    \abs{\hhe-\ee\norm{\nabla\aae}H}^2\dx
    +
    \frac1{2\ee}\int_\Omega
    \left(
        \norm{\ee\p_t\aae}^2-\abs{\hhe}^2
    \right)\dx
    \\
    &\le
    C E_\ee[\aae\mid\Gamma].
\end{aligned}
\end{equation}
\end{proposition}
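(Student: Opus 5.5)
We follow the relative-entropy computation of \cite{Fischer2020,Liu2025}, adapted to the matrix setting. Each term of \(E_\ee[\aae\mid\Gamma]\) is differentiated in time separately. For the energy part, we use \eqref{allen-cahn-equation} and integrate by parts. The boundary terms vanish because \(\p_t\aae=0\) on \(\p\Omega\), by \eqref{allen-boundary}. This gives
\[
\frac{\dd}{\dd t}\int_\Omega\Bigl(\frac\ee2\norm{\nabla\aae}^2+\frac1\ee F_\ee(\aae)\Bigr)\dx=-\frac1\ee\int_\Omega\norm{\ee\p_t\aae}^2\dx .
\]
For the tilted term we write \(-\frac{\dd}{\dd t}\int\xi\cdot\nabla\pe=\int(\div\xi)\p_t\pe-\int\p_t\xi\cdot\nabla\pe\). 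Here we used \eqref{xi-H-boundary-zero} and the chain rule \(\p_t\pe=D\dwfe(\aae):\p_t\aae\), which is legitimate because \(\dwfe\) is smooth.

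Next, the transport identity \eqref{geom2} is used to replace \(\p_t\xi\) by \(-(H\cdot\nabla)\xi-(\nabla H)^\top\xi\), up to \(\mathcal O(\dgt)\). We then add and subtract \(\int(\div\xi)\,H\cdot\nabla\pe\) and \(\int H\cdot\hhe\norm{\nabla\aae}\). The identity \eqref{Heps-identity} gives \(\hhe\norm{\nabla\aae}=-\ee\p_t\aae:\nabla\aae\). With it, the algebra reorganizes into the three squares appearing in \eqref{relative-entropy-inequality}:
\[
\frac1\ee\norm{\ee\p_t\aae}^2=\frac1{2\ee}\norm{\ee\p_t\aae-(\div\xi)D\dwfe}^2+\frac1{2\ee}\bigl(\norm{\ee\p_t\aae}^2-\abs{\hhe}^2\bigr)+\frac1{2\ee}\abs{\hhe}^2+(\div\xi)\p_t\pe-\frac{(\div\xi)^2}{2\ee}\norm{D\dwfe}^2,
\]
and \(\frac1{2\ee}\abs{\hhe}^2=\frac1{2\ee}\abs{\hhe-\ee\norm{\nabla\aae}H}^2+H\cdot\hhe\norm{\nabla\aae}-\frac\ee2\abs H^2\norm{\nabla\aae}^2\).

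The remaining terms are bounded by \(CE_\ee\), mostly as in the scalar case. Terms carrying a factor \(\mathcal O(\dgt)\) are multiplied by \(\abs{\nabla\pe}\) or by \(e_\ee(\aae)\). These are absorbed via Cauchy--Schwarz: Young's inequality and \eqref{coercivity-distance} reduce them to terms of size \(\dgt^2\), or to deficits of the form \(e_\ee-\abs{\nabla\pe}\), both of which are controlled by \(E_\ee\). The symmetric gradient term \(\int\nabla H:(\ee\nabla\aae\odot\nabla\aae-\nne\otimes\nne\abs{\nabla\pe})\) is handled by decomposing \(\nabla\aae=\Pi_{\aae}\nabla\aae+(\nabla\aae-\Pi_{\aae}\nabla\aae)\). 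The projected part is rank one in the spatial index and aligned with \(\nne\) by \eqref{orthogonality-1}. The remainder is controlled by \eqref{coercivity-projection}. The errors \(\norm{D\dwfe}^2\) versus \(2F_\ee\) are treated with Lemma \ref{lemma-differential-inequality}, and this requires the uniform \(L^\infty\) bound on \(\aae\) from the maximum principle. The energy deficit \(e_\ee-\xi\cdot\nabla\pe\ge\frac\ee2\norm{\nabla\aae}^2+\frac1\ee F_\ee-\abs{\nabla\pe}\) absorbs the terms of the form \(\bigl(\sqrt\ee\norm{\Pi\nabla\aae}-\ee^{-1/2}\norm{D\dwfe}\bigr)^2\). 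Finally, \(1-\abs\xi\gtrsim\dgt^2\) handles the terms involving \(1-\xi\cdot\nne\).

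The main obstacle is the mismatch between \(\abs{\nabla\pe}\) and \(\ee\norm{\nabla\aae}^2\) in the curvature terms. In the vector case the diffuse-interface energy is not exactly equipartitioned along \(\nne\), because of tangential oscillations along the wells. I would resolve this with the orthogonal decomposition \eqref{orthogonality} and the coercivity bound \eqref{coercivity-projection}, which controls the non-projected energy. The mollification errors of order \(\ee^{K-1}\) are harmless, since they are much smaller than \(\ee\) and are absorbed into \(F_\ee\). The commutation laws play no role here.
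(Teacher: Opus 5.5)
Your overall route is the paper's: the paper simply invokes the Fischer--Laux--Simon/Laux--Liu relative-entropy computation and checks its hypotheses. Your time derivatives and your completion of the three squares are correct. The gap is in how you claim the remainder closes; two of your absorption steps fail as stated.

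\emph{The transport remainder.} After using \eqref{geom2} you are left with \(-\int_\Omega R\cdot\nabla\pe\dx\), where \(R:=\p_t\xi+(H\cdot\nabla)\xi+(\nabla H)^\top\xi\). You only know \(\abs{R}\le C\abs{\dgt}\), and a bare \(\int_\Omega\abs{\dgt}\abs{\nabla\pe}\dx\) is not bounded by \(CE_\ee[\aae\mid\Gamma]\). For an optimal profile centred on the parallel surface at distance \(h\) from \(\Gamma_t\) (with \(\ee\ll h\ll1\)), the modulated energy is of order \(h^2+\ee^2\), while this integral is of order \(h\). So Young's inequality plus \eqref{coercivity-distance} cannot absorb it. You must split \(\nabla\pe=\abs{\nabla\pe}(\nne-\xi)+\abs{\nabla\pe}\,\xi\). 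The first part is handled by Young and \(\abs{\nne-\xi}^2\abs{\nabla\pe}\le2(\abs{\nabla\pe}-\xi\cdot\nabla\pe)\). For the second, write \(R\cdot\xi=\frac12(\p_t+H\cdot\nabla)\abs{\xi}^2+\xi\cdot(\nabla H)^\top\xi\). The first piece must be \(\mathcal O(\dgt^2)\), i.e.\ you need the transport of \(\abs{\xi}^2\) at second order; this holds here because \(\varphi\) is even and \(\Gamma_t\) moves by mean curvature, but you never use \eqref{geom3}. The second piece must cancel the \(\xi\otimes\xi\) term that appears when \(\nabla\pe\otimes\xi+\xi\otimes\nabla\pe\) is rewritten as \(\abs{\nabla\pe}\bigl[\nne\otimes\nne-(\nne-\xi)\otimes(\nne-\xi)+\xi\otimes\xi\bigr]\) to reach your stress term.

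\emph{The stress term.} Your decomposition \(\nabla\aae=\Pi_{\aae}\nabla\aae+(\nabla\aae-\Pi_{\aae}\nabla\aae)\) disposes only of the non-projected part. The projected part is \((\ee\norm{\Pi_{\aae}\nabla\aae}^2-\abs{\nabla\pe})\,\nabla H:\nne\otimes\nne\). By \eqref{orthogonality-1} its coefficient equals \(\sqrt\ee\norm{\Pi_{\aae}\nabla\aae}\bigl(\sqrt\ee\norm{\Pi_{\aae}\nabla\aae}-\ee^{-1/2}\norm{D\dwfe(\aae)}\bigr)\), which is linear, not quadratic, in the equipartition defect. The squared deficit inside \(e_\ee-\abs{\nabla\pe}\) therefore gives only a bound by \(E_\ee^{1/2}\), and Gronwall does not close. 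What rescues it is the structure of \(H\). From \eqref{geom2}--\eqref{geom3} one gets \(\xi\cdot(\nabla H)^\top\xi=\mathcal O(\dgt)\), so the quadratic form of \(\nabla H\) at \(\nne\) is \(\mathcal O(\abs{\dgt}+\abs{\nne-\xi})\). Young then yields the squared defect plus \(\ee\norm{\nabla\aae}^2(\dgt^2+\abs{\nne-\xi}^2)\), both controlled by the modulated energy.

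\emph{The positive leftovers.} Your square completion leaves \(\frac\ee2\abs{H}^2\norm{\nabla\aae}^2+\frac1{2\ee}(\div\xi)^2\norm{D\dwfe(\aae)}^2\) on the right-hand side. Together with the term \((\div\xi)H\cdot\nabla\pe\) from integrating \((H\cdot\nabla)\xi\cdot\nabla\pe\) by parts, they must be regrouped as \(\frac1{2\ee}\norm{(\div\xi)D\dwfe(\aae)+\ee(H\cdot\nabla)\aae}^2+\frac\ee2\bigl(\abs{H}^2\norm{\nabla\aae}^2-\norm{(H\cdot\nabla)\aae}^2\bigr)\). These are bounded using \eqref{geom1} inside the square and the fact that \(H\) is parallel to \(\nabla\dgt\), a property not contained in \eqref{geom1}--\eqref{geom4}. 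With these three ingredients your sketch becomes the standard argument the paper cites.
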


\begin{proof}
	This is the relative-entropy inequality of Fischer--Laux--Simon and Laux--Liu, in the form adapted to vector-valued potentials with a smooth phase function; see \cite{Fischer2020a,Laux2021,Liu2025}.  The present setting satisfies the hypotheses needed in that calculation: the vector fields \(\xi\) and \(H\) obey the transport estimates \eqref{eq-geometric}; the function \(\dwfe\) is smooth; and the differential inequality \eqref{differential-inequality1} gives the required Modica-type bound.  The added constant in \(F_\ee=F+\ee^{K-1}\) is independent of \(\aa\), so it does not change the Allen--Cahn equation and only contributes a harmless lower-order term to the energy.  Boundary terms vanish because \(\xi=H=0\) on \(\p\Omega\) and the boundary datum is time independent. 
\end{proof}

\begin{proposition}[Uniform estimates]\label{prop:uniform-estimates}
	Let \(\aae\) solve \eqref{allen-cahn} and assume
\eqref{well-prepared-assumption}. Then there exists a constant \(C\), depending only on
\(\Gamma\), \(T\), \(C_0\), and
\(\norm{\aa_{\ee,0}}_{L^\infty}\), but independent of \(\ee\), such that,
for every \(\delta\in(0,\delta_\Gamma)\),
\eqref{uniform-estimates} holds.
    \begin{subequations}\label{uniform-estimates}
\begin{align}
    \sup_{t\in[0,T]}E_\ee[\aae\mid\Gamma](t)
    &\le C\ee,
    \label{energy-small}
    \\
    \sup_{t\in[0,T]}\int_\Omega
    \norm{\nabla\aae-\Pi_{\aae}\nabla\aae}^2\dx
    +
    \int_0^T\int_\Omega
    \norm{\p_t\aae-\Pi_{\aae}\p_t\aae}^2\dx\dt
    &\le C,
    \label{project-estimate}
    \\
    \sup_{t\in[0,T]}
    \int_{\Omega_t^\pm\setminus\Gamma_t(\delta)}
    \left(
        \norm{\nabla\aae}^2+\frac{F_\ee(\aae)}{\ee^2}
    \right)\dx
    &\le C\delta^{-2},
    \label{estimate-1}
    \\
    \int_0^T\int_{\Omega_t^\pm\setminus\Gamma_t(\delta)}
    \norm{\p_t\aae}^2\dx\dt
    &\le C\delta^{-2}.
    \label{estimate-2}
\end{align}
\end{subequations}
\end{proposition}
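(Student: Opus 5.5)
The plan is to derive everything from the modulated-energy inequality, Proposition \ref{prop:modulated-energy-ineq}, via Gronwall, and then read off the remaining bounds from its dissipation terms and from the coercivity Lemma \ref{lemma-energy-coercivity}.

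\emph{Step 1: the energy bound \eqref{energy-small}.} First, the maximum principle for the parabolic system gives a uniform bound on \(\norm{\aae}\). The data satisfy \(\ee\norm{\aa_{\ee,0}}_{L^\infty}\le C_0\ee\), and \(\aa^{\mathrm{bd}}\) is orthogonal; this is the point recorded in Remark \ref{rem:main-comments}(iv). Consequently Lemma \ref{lemma-differential-inequality} applies with a fixed \(M\) for all small \(\ee\), and so Proposition \ref{prop:modulated-energy-ineq} holds. Since
\[
\norm{\ee\p_t\aae}^2\ge\abs{\hhe}^2,
\]
which follows from \eqref{Heps-identity} and Cauchy--Schwarz, all dissipation terms on the left of \eqref{relative-entropy-inequality} are non-negative. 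We may therefore drop them and apply Gronwall. This gives
\[
E_\ee(t)\le e^{CT}E_\ee(0)\le C\ee,
\]
using \eqref{well-prepared-assumption}.

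\emph{Step 2: the projected estimates \eqref{project-estimate}.} The spatial part follows immediately from \eqref{coercivity-projection} and Step 1. For the time part, integrate \eqref{relative-entropy-inequality} in time to get
\[
\frac1{2\ee}\int_0^T\!\!\int_\Omega\norm{\ee\p_t\aae-(\div\xi)D\dwfe(\aae)}^2\le C\ee .
\]
The matrix \((\div\xi)D\dwfe(\aae)\) lies in the range of \(\Pi_{\aae}\). Hence the orthogonal complement part satisfies
\[
\norm{\ee(\p_t\aae-\Pi_{\aae}\p_t\aae)}\le\norm{\ee\p_t\aae-(\div\xi)D\dwfe(\aae)} .
\]
Therefore
\[
\ee^2\int_0^T\!\!\int_\Omega\norm{\p_t\aae-\Pi_{\aae}\p_t\aae}^2\le C\ee^2,
\]
which is the claim.

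\emph{Step 3: the bulk estimates \eqref{estimate-1}.} On \(\Omega\setminus\Gamma_t(\delta)\) we have \(\min\{\dgt^2,1\}\ge\delta^2\), since \(\delta<1\). Then \eqref{coercivity-distance} yields
\[
\int_{\Omega\setminus\Gamma_t(\delta)}\Bigl(\frac\ee2\norm{\nabla\aae}^2+\frac1\ee F_\ee\Bigr)\le C\ee\delta^{-2} .
\]
Dividing by \(\ee\) gives \eqref{estimate-1}.

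\emph{Step 4: the time-derivative bound \eqref{estimate-2}, the main obstacle.} Here we must control the full \(\p_t\aae\), not only its projected-away part. I would expand the first dissipation term and use Step 2. Writing
\[
\ee\p_t\aae=\bigl(\ee\p_t\aae-(\div\xi)D\dwfe\bigr)+(\div\xi)D\dwfe,
\]
it suffices to bound
\[
\frac1\ee\int_0^T\!\!\int_{\Omega\setminus\Gamma_t(\delta)}\abs{\div\xi}^2\norm{D\dwfe(\aae)}^2 .
\]
By Lemma \ref{lemma-differential-inequality}, \(\norm{D\dwfe}^2\le 2F_\ee\). By \eqref{estimate-1}, the bulk integral of \(F_\ee/\ee\) is at most \(C\ee\delta^{-2}\). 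This gives
\[
\ee^2\int_0^T\!\!\int_{\Omega\setminus\Gamma_t(\delta)}\norm{\p_t\aae}^2\le C\ee\cdot\ee+C\ee\delta^{-2}\cdot\ee\cdot(\text{factor}),
\]
so the time part carries an extra \(\ee^{-1}\) compared with what we need.

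To repair this I would instead localize the energy. Test the equation with \(\eta^2(x,t)\p_t\aae\), where \(\eta\) is a cutoff with \(\eta=1\) outside \(\Gamma_t(\delta)\), \(\eta=0\) in \(\Gamma_t(\delta/2)\), and \(\abs{\nabla\eta}+\abs{\p_t\eta}\le C\delta^{-1}\). This gives
\[
\int\!\!\int\eta^2\norm{\p_t\aae}^2+\sup_t\int\eta^2\Bigl(\frac12\norm{\nabla\aae}^2+\frac{F}{\ee^2}\Bigr)
\le \text{initial}+C\int\!\!\int\Bigl(\abs{\eta\p_t\eta}+\abs{\nabla\eta}^2\Bigr)\Bigl(\norm{\nabla\aae}^2+\frac{F}{\ee^2}\Bigr).
\]
The cross term \(2\eta\nabla\eta\cdot\p_t\aae:\nabla\aae\) is absorbed by Young's inequality. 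The right-hand side is then bounded by \(C\delta^{-2}\) times a power of \(\delta^{-1}\), using \eqref{estimate-1} on \(\Omega\setminus\Gamma_t(\delta/2)\). The initial term is bounded by \(C\delta^{-2}\) through \eqref{coercivity-distance} at \(t=0\). Getting exactly \(\delta^{-2}\) (rather than a worse power) needs care with the cutoff, e.g.\ distributing the weight \(\min\{\dgt^2,1\}\) directly as \(\eta^2\). I expect this bookkeeping to be the delicate part, and otherwise the estimate goes through.
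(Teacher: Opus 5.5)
\textbf{There is a gap in Step 4, and you created it yourself: the argument you abandoned was correct.} Steps 1--3 are correct and essentially coincide with the paper.

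You misread the outcome of your first computation in Step 4. Pointwise, using \(\norm{D\dwfe(\aae)}^2\le 2F_\ee(\aae)\),
\[
\ee^2\norm{\p_t\aae}^2\le 2\norm{\ee\p_t\aae-(\div\xi)D\dwfe(\aae)}^2+4\norm{\div\xi}_{L^\infty}^2F_\ee(\aae).
\]
Integrate over \(\{(x,t):x\in\Omega\setminus\Gamma_t(\delta)\}\). The first term is at most \(C\ee^2\) by the time-integrated inequality \eqref{relative-entropy-inequality}; all dissipation terms there are non-negative, as you checked in Step 1. The second term is at most \(C\ee^2\delta^{-2}\) by \eqref{estimate-1}. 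Dividing by \(\ee^2\) and using \(\delta<1\) gives \eqref{estimate-2}. There is no extra factor \(\ee^{-1}\): your own displayed bound \(C\ee^2+C\ee^2\delta^{-2}\) already says this.

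The localized energy argument you switched to does not give the stated rate. The cutoff satisfies \(\abs{\nabla\eta}+\abs{\p_t\eta}\lesssim\delta^{-1}\) on the annulus \(\{\delta/2\le\abs{\dgt}\le\delta\}\). The energy there is only known to be \(\lesssim\delta^{-2}\), so the right-hand side is \(\lesssim\delta^{-4}\). Your suggested repair \(\eta^2=\min\{\dgt^2,1\}\) fails outright. With that choice, \(\abs{\nabla\eta}=1\) across the transition layer, where \(\norm{\nabla\aae}^2\) has integral of order \(\ee^{-1}\). A \(\delta^{-4}\) bound would still be enough for the compactness used later, but it is not what the proposition states. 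As submitted, the proposal therefore does not prove \eqref{estimate-2}.

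For comparison, the paper takes a different route. It keeps the mean-curvature block of \eqref{relative-entropy-inequality} and shows, via \eqref{Heps-identity} and Cauchy--Schwarz, that it dominates \(\frac{\ee}{2}\norm{\p_t\aae+(H\cdot\nabla)\aae}^2\). It then splits \(\p_t\aae=(\p_t\aae+(H\cdot\nabla)\aae)-(H\cdot\nabla)\aae\) and bounds the last term using \(\abs{H}\le C\) and the gradient part of \eqref{estimate-1}. Your abandoned argument instead uses the \(\div\xi\)-block together with the potential part of \eqref{estimate-1}. It is just as short, and it needs only the non-negativity of the \(H\)-block rather than its quantitative lower bound.
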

\begin{proof}
	We start from \eqref{relative-entropy-inequality}.  The only term there which is
	not manifestly non-negative is
	\[
		\frac1{2\ee}\int_\Omega
		\left(\norm{\ee\p_t\aae}^2-\abs{\hhe}^2
		+\abs{\hhe-\ee\norm{\nabla\aae}H}^2\right)\dx .
	\]
	It is nevertheless non-negative up to the material time derivative generated by
	\(H\).  Indeed, from \eqref{Heps-identity},
	\[
		\hhe\cdot H\,\norm{\nabla\aae}
		=-\ee\p_t\aae:(H\cdot\nabla)\aae .
	\]
	Hence
	\begin{equation}\label{positive-H-block}
\begin{aligned}
    &\ee^2\norm{\p_t\aae}^2-\abs{\hhe}^2
    +\abs{\hhe-\ee\norm{\nabla\aae}H}^2
    \\
     =&
    \ee^2\norm{\p_t\aae}^2
    +\ee^2\norm{\nabla\aae}^2\abs{H}^2
    -2\ee\norm{\nabla\aae}\hhe\cdot H
    \\
     \ge&
    \ee^2\norm{\p_t\aae}^2
    +\ee^2\norm{(H\cdot\nabla)\aae}^2
    +2\ee^2\p_t\aae:(H\cdot\nabla)\aae
    \\
     =&
    \ee^2\norm{\p_t\aae+(H\cdot\nabla)\aae}^2 .
\end{aligned}
\end{equation}
	Moreover, by the orthogonality of the projection \(\Pi_{\aae}\),
	\begin{equation}\label{time-projection-square}
		\begin{aligned}
			 & \norm{\ee\p_t\aae-(\div\xi)D\dwfe(\aae)}^2       \\
			 =&\ee^2\norm{\p_t\aae-\Pi_{\aae}\p_t\aae}^2
			+\norm{\ee\Pi_{\aae}\p_t\aae-(\div\xi)D\dwfe(\aae)}^2 .
		\end{aligned}
	\end{equation}
	Dropping the non-negative square in the second line of \eqref{time-projection-square},
	using \eqref{positive-H-block}, and applying Gronwall's lemma with the initial
	smallness \eqref{well-prepared-assumption}, we obtain
	\begin{equation}\label{gronwall-inequality}
		\begin{aligned}
			\sup_{t\in[0,T]}E_\ee[\aae\mid\Gamma](t)
			 & +\ee\int_0^T\int_\Omega
			\norm{\p_t\aae-\Pi_{\aae}\p_t\aae}^2\dx\dt \\
			 & +\ee\int_0^T\int_\Omega
			\norm{\p_t\aae+(H\cdot\nabla)\aae}^2\dx\dt
			\le C\ee .
		\end{aligned}
	\end{equation}
	Together with the projection coercivity \eqref{coercivity-projection}, this gives
	\eqref{energy-small} and \eqref{project-estimate}.

	We next derive the estimates away from the interface.  If
	\(x\in\Omega_t^\pm\setminus\Gamma_t(\delta)\), then
	\(\min\{\dgt(x,t)^2,1\}\ge c\min\{\delta^2,1\}\).  Therefore
	\eqref{coercivity-distance} and \eqref{energy-small} imply
	\[
		\ee\int_{\Omega_t^\pm\setminus\Gamma_t(\delta)}
		\norm{\nabla\aae}^2\dx
		+\frac1\ee\int_{\Omega_t^\pm\setminus\Gamma_t(\delta)}
		F_\ee(\aae)\dx
		\le C\delta^{-2}\ee .
	\]
	Dividing by \(\ee\) yields
	\eqref{estimate-1}.  Finally,
	\[
		\p_t\aae
		=\bigl(\p_t\aae+(H\cdot\nabla)\aae\bigr)-(H\cdot\nabla)\aae,
	\]
	so the boundedness of \(H\), \eqref{gronwall-inequality}, and \eqref{estimate-1}
	lead to \eqref{estimate-2}.
\end{proof}

\begin{proposition}[Compactness and identification of commutators]\label{prop:limit-convergence}
	There exist limiting maps \(\aa_\pm\) with the regularity stated in Theorem \ref{thm:main} and a subsequence \(\ee_k\downarrow0\) such that
	\begin{subequations}\label{limit-convergences}
		\begin{align}
			[\p_t\aaek,\aaek]_L
			            & \rightharpoonup
			\sum_\pm[\p_t\aa_\pm,\aa_\pm]_L\chi_{\Omega_t^\pm}
			            &                                             & \text{weakly in }L^2((0,T);L^2(\Omega)),\label{limit-comm-t-left}                  \\
			[\p_i\aaek,\aaek]_L
			            & \stackrel{*}{\rightharpoonup}
			\sum_\pm[\p_i\aa_\pm,\aa_\pm]_L\chi_{\Omega_t^\pm}
			            &                                             & \text{weakly-* in }L^\infty((0,T);L^2(\Omega)),\label{limit-comm-i-left}           \\
			[\aaek,\p_t\aaek]_R
			            & \rightharpoonup
			\sum_\pm[\aa_\pm,\p_t\aa_\pm]_R\chi_{\Omega_t^\pm}
			            &                                             & \text{weakly in }L^2((0,T);L^2(\Omega)),\label{limit-comm-t-right}                 \\
			[\aaek,\p_i\aaek]_R
			            & \stackrel{*}{\rightharpoonup}
			\sum_\pm[\aa_\pm,\p_i\aa_\pm]_R\chi_{\Omega_t^\pm}
			            &                                             & \text{weakly-* in }L^\infty((0,T);L^2(\Omega)),\label{limit-comm-i-right}          \\
			\p_t\aaek   & \rightharpoonup\p_t\aa_\pm
			            &                                             & \text{weakly in }L^2_{\loc}(\Omega_T^\pm),\label{limit-t}                          \\
			\nabla\aaek & \stackrel{*}{\rightharpoonup}\nabla\aa_\pm
			            &                                             & \text{weakly-* in }L^\infty((0,T);L^2_{\loc}(\Omega_t^\pm)),\label{limit-i} \\
			\aaek       & \to\aa_\pm
			            &                                             & \text{strongly in }C([0,T];L^2_{\loc}(\Omega_t^\pm)),\label{limit-strong}          \\
			\aaek       & \to \aa:=\sum_\pm\aa_\pm\chi_{\Omega_t^\pm}
			            &                                             & \text{a.e. in }\Omega\times(0,T).
			\label{ae-convergence}
		\end{align}
	\end{subequations}
	Moreover \(\aa_\pm\) take values in \(\onpm\) a.e. in \(\Omega_T^\pm\).
\end{proposition}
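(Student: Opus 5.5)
The plan has four steps: uniform bounds in the bulk, strong compactness there, identification of the limit values, and identification of the commutator limits.

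\textbf{Step 1: bulk bounds.} By the maximum principle (Remark \ref{rem:main-comments}(iv)), \(\norm{\aae}_{L^\infty}\le M\) uniformly in \(\ee\). Fix \(\delta\in(0,\delta_\Gamma)\). By \eqref{estimate-1} and \eqref{estimate-2}, \(\aae\) is bounded in \(L^\infty(0,T;H^1)\cap H^1(0,T;L^2)\) on the moving region \(\{(x,t): \pm\dgt(x,t)>\delta\}\), with constants \(C\delta^{-2}\).

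\textbf{Step 2: strong compactness away from \(\Gamma\).} Since \(\Gamma\) is smooth, I cover \(\Omega_T^\pm\setminus\Gamma(\delta)\) by finitely many space-time cylinders \(U\times(t_1,t_2)\) on which \(U\) stays inside one phase. On each cylinder I apply Aubin--Lions with \(H^1(U)\Subset L^2(U)\subset L^2(U)\). This gives strong convergence in \(C([t_1,t_2];L^2(U))\) along a subsequence. A diagonal argument over \(\delta=1/m\) and the cylinders then yields \(\aaek\to\aa_\pm\) in \(C([0,T];L^2_{\loc}(\Omega_t^\pm))\) and a.e. in \(\Omega_T^\pm\). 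Because \(\Gamma\) has measure zero, this gives \eqref{ae-convergence}, \eqref{limit-strong}, and the weak convergences \eqref{limit-t} and \eqref{limit-i}. The regularity of \(\aa_\pm\) follows from weak lower semicontinuity, noting that the bounds blow up only like \(\delta^{-2}\), which is why they are stated locally. I identify \(\aa_\pm(0)=\aa_{0,\pm}\) from \eqref{initial-strong-convergence} together with the \(C([0,T];L^2_{\loc})\) convergence.

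\textbf{Step 3: values of the limits.} From \eqref{estimate-1}, \(\int F(\aaek)\le C\delta^{-2}\ee^2\to0\) away from \(\Gamma_t\). Fatou then gives \(F(\aa_\pm)=0\), so \(\aa_\pm\in\on\) a.e. To get the correct component I use \eqref{coercivity-distance}: \(\int\abs{\nabla\pe}\min\{\dgt^2,1\}\le C\ee\), and \(\p_t\pe\) is controlled similarly. Hence \(\pe(\aaek)\) converges locally to a function with zero gradient in each connected phase, taking only the values \(0\) or \(\sigma\) (where \(\dwf\) vanishes on \(\onn\) and equals \(\sigma\) on \(\onp\)). The constant in \(\Omega_t^-\) is \(0\) because \(\p\Omega\subset\Omega_t^-\) and \(\aa^{\mathrm{bd}}\in\onn\), via the trace and strong convergence near \(\p\Omega\). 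In \(\Omega_t^+\) the constant must be \(\sigma\): otherwise the energy \(\int\xi\cdot\nabla\pe\) would fail to recover \(\sigma\hn(\Gamma_t)\) while \(E_\ee\to0\). Concretely, the total energy converges to \(\sigma\hn(\Gamma_t)\) and \(-\int\xi\cdot\nabla\pe=\int\pe\div\xi\to\int\pe^{\lim}\div\xi\), which forces the jump \(\sigma\) across \(\Gamma_t\). I expect this identification of the sign of the phase to be the main obstacle, and I would handle it through this integration-by-parts energy argument.

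\textbf{Step 4: global commutator limits.} By \eqref{comm-invariance}, \(\norm{[\p_i\aae,\aae]_L}\le4M\norm{\p_i\aae-\Pi_{\aae}\p_i\aae}\). Then \eqref{project-estimate} bounds all four commutators in \(L^\infty L^2\) and in \(L^2L^2\) (for \(i=0\)) on the whole of \(\Omega\), including \(\Gamma(\delta)\). Hence there are weak(-*) limits along a further subsequence. To identify them, I test against \(\Phi\in C_c^\infty(\Omega_T^\pm)\). On the support of \(\Phi\), the products of a strongly convergent factor (\(\aaek\), bounded in \(L^\infty\) and convergent in \(L^2\)) and a weakly convergent factor (the derivative) converge to \([\p_i\aa_\pm,\aa_\pm]_L\). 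Since such \(\Phi\) are dense in \(L^2\) (the set \(\Gamma\) has measure zero), together with the uniform bound this determines the global limit as \(\sum_\pm[\cdot]\chi_{\Omega_t^\pm}\).
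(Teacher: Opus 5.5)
Your architecture matches the paper's: global commutator bounds from \eqref{project-estimate} and the $L^\infty$ bound, Aubin--Lions away from $\Gamma$, Fatou for $F(\aa_\pm)=0$, and strong-times-weak identification against test functions supported in one phase. However, two steps fail as written.

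\textbf{Regularity up to the interface.} Theorem~\ref{thm:main} asserts $\p_t\aa_\pm\in L^2((0,T);L^2(\Omega_t^\pm))$ and $\nabla\aa_\pm\in L^\infty((0,T);L^2(\Omega_t^\pm))$ on the whole phase, up to $\Gamma_t$; only the convergence \eqref{main-local-convergence} is local. Lower semicontinuity applied to \eqref{estimate-1}--\eqref{estimate-2} gives $\norm{\nabla\aa_\pm(t)}^2_{L^2(\Omega_t^\pm\setminus\Gamma_t(\delta))}\le C\delta^{-2}$. This degenerates as $\delta\to0$, so you get only $H^1_{\mathrm{loc}}$. The missing step uses the constraint. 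From $\aa_\pm\aa_\pm^\top=\ii$ one gets $[\p_i\aa_\pm,\aa_\pm]_L=2\p_i\aa_\pm\aa_\pm^\top$, hence $\norm{\p_i\aa_\pm}=\tfrac12\norm{[\p_i\aa_\pm,\aa_\pm]_L}$ for $0\le i\le d$. Your own Step~4 already shows that the weak limits of the commutators are bounded in $L^\infty((0,T);L^2(\Omega))$, resp.\ $L^2((0,T);L^2(\Omega))$, on all of $\Omega$, and that they coincide with $[\p_i\aa_\pm,\aa_\pm]_L$ a.e.\ in $\Omega_T^\pm$. Combining the two gives the global bounds; this is the paper's Step~5, and it is the real purpose of the projected estimate. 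This global regularity is also what gives meaning to the traces of $\aa_\pm$ on $\Gamma_t$ used in Section~\ref{section:minimal-pair-condition}.

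\textbf{Choice of component.} Your energy argument for $\Omega_t^+$ is circular. Nothing available shows $\int_\Omega(\tfrac\ee2\norm{\nabla\aae}^2+\tfrac1\ee F_\ee(\aae))\dx\to\sigma\hn(\Gamma_t)$. The bound $E_\ee[\aae\mid\Gamma]\le C\ee$ only says that this energy and $\int_\Omega\xi\cdot\nabla\pe\dx$ differ by $O(\ee)$. It cannot detect whether a transition layer is present at all: the interface-free configuration $\aa^{\mathrm{bd}}$ itself has modulated energy $O(\ee)$. If the limit of $\pe$ took the same value in both phases, then $\int\xi\cdot\nabla\pe$ and the energy would both tend to $0$, and nothing would be contradicted. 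Likewise, your boundary-trace argument only reaches the component of $\Omega_t^-$ adjacent to $\p\Omega$, and $\Omega_t^-$ need not be connected when $\Gamma_t$ has several components.

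What fixes the components is the initial datum, propagated in time. The limit $\dwf(\aa_\pm)$ of $\pe$ is constant on each connected component of $\Omega_T^\pm$; the paper uses $\det\aa_\pm\in H^1_{\mathrm{loc}}$, which takes only the values $\pm1$, in the same way. The convergence $\aaek\to\aa_\pm$ in $C([0,T];L^2_{\mathrm{loc}}(\Omega_t^\pm))$, together with $\aa_\pm(0)=\aa_{0,\pm}\in\onpm$ from \eqref{initial-strong-convergence}, then identifies the constant. You already have both ingredients: the control of $\p_t\pe$, and the identification of $\aa_\pm(0)$ in your Step~2. Use them here instead of the energy argument.
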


\begin{proof}
	The proof is divided into several steps. 

	\smallskip
	\noindent\emph{Step 1. Global weak bounds for the commutators.}
	Let
	\[
		M:=\sup_{\ee}\norm{\aae}_{L^\infty(\Omega\times(0,T))}<\infty,
	\]
	which follows from the maximum principle and the uniform \(L^\infty\)-bound of the initial and boundary data.  By the commutator invariance identities \eqref{comm-invariance}, for \(0\le i\le d\) with \(\p_0:=\p_t\),
	\[
		[\p_i\aae,\aae]_L=[\p_i\aae-\Pi_{\aae}\p_i\aae,\aae]_L,
		\qquad
	[\aae,\p_i\aae]_R=[\aae,\p_i\aae-\Pi_{\aae}\p_i\aae]_R .
	\]
	Using the elementary estimate
	\[
		\norm{[\xx,\aa]_L}+\norm{[\aa,\xx]_R}
		\le 4\norm{\aa}\norm{\xx},
		\qquad \aa,\xx\in\Mn,
	\]
	and the projected estimate \eqref{project-estimate}, we obtain
	\begin{align*}
		[\p_t\aae,\aae]_L,
		[\aae,\p_t\aae]_R
		 & \quad\text{uniformly bounded in }L^2((0,T);L^2(\Omega)),      \\
		[\p_i\aae,\aae]_L,
		[\aae,\p_i\aae]_R
		 & \quad\text{uniformly bounded in }L^\infty((0,T);L^2(\Omega)),
		\qquad 1\le i\le d .
	\end{align*}
	Hence, after extracting a subsequence, there exist
	\[
		\cc_0^L,\cc_0^R\in L^2((0,T);L^2(\Omega;\An)),
		\qquad
		\cc_i^L,\cc_i^R\in L^\infty((0,T);L^2(\Omega;\An))
	\]
	such that
    \begin{subequations}\label{limit-comm-amb}
		\begin{align}
			[\p_t\aaek,\aaek]_L
			            & \rightharpoonup
			\cc_0^L
			            &                                             & \text{weakly in }L^2((0,T);L^2(\Omega)),\label{limit-comm-tleft}                  \\
			[\p_i\aaek,\aaek]_L
			            & \stackrel{*}{\rightharpoonup}
			\cc_i^L
			            &                                             & \text{weakly-* in }L^\infty((0,T);L^2(\Omega)),\label{limit-comm-ileft}           \\
			[\aaek,\p_t\aaek]_R
			            & \rightharpoonup
			\cc_0^R
			            &                                             & \text{weakly in }L^2((0,T);L^2(\Omega)),\label{limit-comm-tright}                 \\
			[\aaek,\p_i\aaek]_R
			            & \stackrel{*}{\rightharpoonup}
			\cc_i^R
			            &                                             & \text{weakly-* in }L^\infty((0,T);L^2(\Omega)).\label{limit-comm-iright}          
		\end{align}
	\end{subequations}
	The purpose of the remaining steps is to identify these abstract limits.

	\smallskip
	\noindent\emph{Step 2. Compactness away from \(\Gamma_t\).}
	The estimates \eqref{estimate-1}--\eqref{estimate-2} imply
	\begin{equation}\label{fixed-cylinder-bounds}
		\sup_{t\in (0,T)}\norm{\aae(t)}_{H^1(\o\setminus\Gamma_t(\delta))}
		+\norm{\p_t\aae}_{L^2((0,T);L^2(\o\setminus\Gamma_t(\delta)))}
		\le C_\delta,
	\end{equation}
	where \(C_\delta\) is independent of \(\ee\). The standard Aubin--Lions--Simon compactness criterion, the family \(\{\aae\}_\ee\) is relatively compact in \(C((0,T);L^2(\Omega\setminus\Gamma_t(\delta)))\).  Hence, after passing to a subsequence depending on \(\delta\), there is a map \(\aa_\delta^\pm\) such that
	\begin{subequations}\label{local-cylinder-conv}
		\begin{align}
			\aae       & \to\aa_\delta^\pm
			           &                                              & \text{strongly in }C((0,T);L^2(\Omega\setminus\Gamma_t(\delta))),\label{local-cylinder-strong}       \\
			\p_t\aae   & \rightharpoonup\p_t\aa_\delta^\pm
			           &                                              & \text{weakly in }L^2((0,T);L^2(\Omega\setminus\Gamma_t(\delta))),\label{local-cylinder-time}         \\
			\nabla\aae & \stackrel{*}{\rightharpoonup}\nabla\aa_\delta^\pm
			           &                                              & \text{weakly-* in }L^\infty((0,T);L^2(\Omega\setminus\Gamma_t(\delta))).\label{local-cylinder-space}
		\end{align}
	\end{subequations}
	The limit is unique on overlaps of such cylinders, because the convergence is strong in \(L^2\).  Therefore the local limits patch together to a map on \(\Omega_T^\pm\). Using a diagonal extraction with $\delta=\delta_k=1/k$, we obtain a single subsequence, still denoted by \(\ee_k\), and maps \(\aa_\pm\) on \(\Omega_T^\pm\) such that \eqref{limit-t}, \eqref{limit-i}, and \eqref{limit-strong} hold on every compact subset of \(\Omega_T^\pm\).  Passing to a further subsequence if necessary, the strong local convergence also gives the a.e. convergence \eqref{ae-convergence}.

	\smallskip
	\noindent\emph{Step 3. Identification of the wells.}
	Let \(K\Subset\Omega_T^\pm\).  For all sufficiently small \(\delta\), the compact set \(K\) is contained in \(\bigcup_t\Omega\setminus\Gamma_t(\delta)\times\{t\}\).  From \eqref{estimate-1},
	\[
		\int_K F(\aaek)\,\dx\dt
		\le \ee_k^2
		\int_K \frac{F_\ee(\aaek)}{\ee_k^2}\,\dx\dt
		\le C_K\ee_k^2\to0 .
	\]
	Since \(\aaek\to\aa_\pm\) a.e. on \(K\), Fatou's lemma gives \(F(\aa_\pm)=0\) a.e. on \(K\).  Thus \(\aa_\pm\in\on\) a.e. in \(\Omega_T^\pm\).
    
   It remains to identify the connected component of \(\on\).
   Since \(F(\aa_\pm)=0\) a.e., we have
   \[
    \det\aa_\pm\in\{1,-1\}
    \qquad\text{a.e. in }\Omega_T^\pm .
  \]
  Moreover, \(\aa_\pm\in H^1_{\loc}(\Omega_T^\pm)\), hence
\(\det\aa_\pm\in H^1_{\loc}(\Omega_T^\pm)\).  Since an \(H^1\)-function
taking only the values \(\pm1\) has vanishing weak gradient, \(\det\aa_\pm\)
is constant on each connected component of \(\Omega_T^\pm\).  The initial
strong convergence \eqref{initial-strong-convergence} fixes this constant:
\[
    \det\aa_+=1,
    \qquad
    \det\aa_-=-1 .
\]
Therefore
\[
    \aa_\pm\in\onpm
    \qquad\text{a.e. in }\Omega_T^\pm .
\]

	\smallskip
	\noindent\emph{Step 4. Global identification.}
	  Let \(\Phi\in C_c^\infty(\Omega_T^\pm;\An)\).  We prove the left spatial commutator convergence; the remaining three cases are identical.  Since the support of \(\Phi\) is compactly contained in one phase, the local convergences from Step 2 apply.  We write
	\[
		\begin{aligned}
			 & \int_{\Omega_T^\pm}
			\bigl([\p_i\aaek,\aaek]_L-[\p_i\aa_\pm,\aa_\pm]_L\bigr):\Phi\,\dx\dt \\
			  =&\int_{\Omega_T^\pm}[\p_i\aaek-\p_i\aa_\pm,\aaek]_L:\Phi\,\dx\dt
			+\int_{\Omega_T^\pm}[\p_i\aa_\pm,\aaek-\aa_\pm]_L:\Phi\,\dx\dt .
		\end{aligned}
	\]
	For the first term, expand the bracket:
	\[
		[\p_i\aaek-\p_i\aa_\pm,\aaek]_L:\Phi
		=(\p_i\aaek-\p_i\aa_\pm):\Phi\aaek
		-(\p_i\aaek-\p_i\aa_\pm):\Phi^\top\aaek .
	\]
	The factor \(\Phi\aaek\) converges strongly in \(L^2\), while \(\p_i\aaek\rightharpoonup\p_i\aa_\pm\) weakly in \(L^2\) on the support of \(\Phi\).  Hence the first term tends to zero.  The second term tends to zero by the strong convergence \(\aaek\to\aa_\pm\) in \(L^2\) and the fact that \(\p_i\aa_\pm\in L^2_{\loc}\).  Thus, for \(0\le i\le d\),
	\[
		\cc_i^L=[\p_i\aa_\pm,\aa_\pm]_L
		\qquad\text{a.e. in }\Omega_T^\pm .
	\]
	The same argument gives
	\[
		\cc_i^R=[\aa_\pm,\p_i\aa_\pm]_R
		\qquad\text{a.e. in }\Omega_T^\pm .
	\]

	\smallskip
	\noindent\emph{Step 5. Regularity up to the interface from the commutators.} 
	For \(\aa_\pm\in\on\), differentiating \(\aa_\pm\aa_\pm^\top=\ii\) gives
	\[
		\p_i\aa_\pm\aa_\pm^\top+\aa_\pm\p_i\aa_\pm^\top=0.
	\]
	Consequently,
	\[
		[\p_i\aa_\pm,\aa_\pm]_L
		=2\p_i\aa_\pm\aa_\pm^\top,
		\qquad
		[\aa_\pm,\p_i\aa_\pm]_R
		=2\aa_\pm^\top\p_i\aa_\pm .
	\]
	Since multiplication by an orthogonal matrix preserves the Frobenius norm,
	\[
		\norm{\p_i\aa_\pm}
		=\frac12\norm{[\p_i\aa_\pm,\aa_\pm]_L}
		=\frac12\norm{[\aa_\pm,\p_i\aa_\pm]_R}.
	\]
	The bounds \eqref{limit-comm-amb} on the weak commutator limits therefore imply
	\[
		\p_t\aa_\pm\in L^2((0,T);L^2(\Omega_t^\pm)),
		\qquad
		\nabla\aa_\pm\in L^\infty((0,T);L^2(\Omega_t^\pm)).
	\]
	This gives the regularity stated in Theorem \ref{thm:main} and completes the proof.
\end{proof}

\section{Harmonic-map heat flow and transmission identities}\label{section:harmonic-map-heat-flow}

In this section we only prove the two weak identities stated in Theorem \ref{thm:main}.  Their interpretation as the bulk harmonic-map heat flow and, in the smooth case, as the Neumann-type transmission condition has already been explained in Remark \ref{rem:main-comments}.

Let \(\Phi\in C_c^\infty((0,T)\times\Omega;\An)\).  Testing the left commutator equation \eqref{commu-eq-} against \(\Phi\) and integrating by parts in space gives
\begin{equation}\label{left-comm-weak-eps}
	\int_0^T\int_\Omega
	\left(
	[\p_t\aae,\aae]_L:\Phi
	+\sumod[\p_i\aae,\aae]_L:\p_i\Phi
	\right)\dx\dt=0 .
\end{equation}
There is no boundary contribution because \(\Phi\) is compactly supported in \((0,T)\times\Omega\).  Passing to the subsequential limit in \eqref{left-comm-weak-eps} by Proposition \ref{prop:limit-convergence},
\begin{equation}\label{left-limit-comm-weak}
	\sum_\pm\int_0^T\int_{\Omega_t^\pm}
	\left(
	[\p_t\aa_\pm,\aa_\pm]_L:\Phi
	+\sumod[\p_i\aa_\pm,\aa_\pm]_L:\p_i\Phi
	\right)\dx\dt=0 .
\end{equation}
Since \(\Phi\in\An\) and \(\aa_\pm\aa_\pm^\top=\ii\),
\[
	[\p_i\aa_\pm,\aa_\pm]_L:\Phi
	=2\p_i\aa_\pm\aa_\pm^\top:\Phi,
	\qquad 0\le i\le d.
\]
Dividing \eqref{left-limit-comm-weak} by two gives
\begin{equation}\label{heat-flow-and-neumann-left}
	\sum_{\pm}\int_0^T\int_{\Omega_t^\pm}
	\left(
	\p_t\aa_\pm\aa_\pm^\top:\Phi
	+\sumod \p_i\aa_\pm\aa_\pm^\top:\p_i\Phi
	\right)\dx\dt=0,
\end{equation}
which is \eqref{inter-equality-left}.

Similarly, for \(\Psi\in C_c^\infty((0,T)\times\Omega;\An)\), testing \eqref{commu-eq-} gives
\begin{equation}\label{right-comm-weak-eps}
	\int_0^T\int_\Omega
	\left(
	[\aae,\p_t\aae]_R:\Psi
	+\sumod[\aae,\p_i\aae]_R:\p_i\Psi
	\right)\dx\dt=0 .
\end{equation}
Using Proposition \ref{prop:limit-convergence} and the identity
\[
	[\aa_\pm,\p_i\aa_\pm]_R:\Psi
	=2\aa_\pm^\top\p_i\aa_\pm:\Psi,
	\qquad 0\le i\le d,
\]
we obtain
\begin{equation}\label{heat-flow-and-neumann-right}
	\sum_{\pm}\int_0^T\int_{\Omega_t^\pm}
	\left(
	\aa_\pm^\top\p_t\aa_\pm:\Psi
	+\sumod \aa_\pm^\top\p_i\aa_\pm:\p_i\Psi
	\right)\dx\dt=0,
\end{equation}
which is \eqref{inter-equality-right}.

\section{Minimal-pair condition}\label{section:minimal-pair-condition}
\subsection{Trace identification}

We now identify the traces of \(\aa_\pm\) on the interface.  The argument follows
the strategy of \cite[Section 5]{Liu2025}; we include the details needed for the
matrix-valued potential.

The mollification error is harmless.  Since \(\dwf\) is globally Lipschitz and
\(\phi_\ee\) is supported in a ball of radius \(O(\ee^K)\),
\[
	\norm{\dwfe-\dwf}_{L^\infty(\Mn)}\le C\ee^K .
\]
Using \(\xi=0\) on \(\p\Omega\), we therefore have
\begin{equation}\label{difference}
	\begin{aligned}
		 & \left|\int_\Omega \xi\cdot\nabla\dwf(\aae)\dx
		-\int_\Omega \xi\cdot\nabla\dwfe(\aae)\dx\right| \\
		 =&\left|\int_\Omega (\div\xi)
		\bigl(\dwf(\aae)-\dwfe(\aae)\bigr)\dx\right|
		\le C(\xi)\ee^K .
	\end{aligned}
\end{equation}
Combining \eqref{difference}, \(\wff\le F\), and \eqref{energy-small}, and then
dividing by \(\ee\), gives the scaled estimate
\begin{equation}\label{relative-entropy-1}
	\sup_{t\in[0,T]}
	\int_\Omega
	\left(
	\frac12\norm{\nabla\aae}^2+
	\frac1{\ee^2}\wff(\aae)
	-\frac1\ee(\xi\cdot\nabla)\dwf(\aae)
	\right)\dx
	\le C .
\end{equation}
This is the form used below on normal line segments.

\begin{lemma}[One-dimensional lower bound; cf. {\cite[Lemmas~4.1--4.2]{Liu2025}}]\label{kappa}
	There exists a non-negative continuous function
	\[
		\kappa:\onp\times\onn\times[0,1]\to[0,\infty)
	\]
	with the following properties.
	\begin{enumerate}[label=\textup{(\arabic*)},leftmargin=2.5em]
		\item \(\kappa(\bb_+,\bb_-,0)=0\) if and only if \(\norm{\bb_+-\bb_-}=2\).
		\item There exist constants \(C_0=C_0(n)>0\), \(\tau_0=\tau_0(n)>0\), and \(\delta_0=\delta_0(n)>0\) such that, for every \(\gamma\in H^1([\!-\delta,\delta];\Mn)\) satisfying
		      \[
			      \gamma(-\delta)\in B_{\delta_0}(\onn),
			      \qquad
			      \gamma(\delta)\in B_{\delta_0}(\onp),
		      \]
		      and every \(\tau\in(0,\tau_0)\),
		      \begin{equation}\label{one-energy}
			      \begin{aligned}
				        \int_{-\delta}^{\delta}
				      \left(
				      \frac12\norm{\gamma'}^2
				      +\frac1{\ee^2}\wff(\gamma)
				      -\frac1\ee(\dwf\circ\gamma)'
				      \right)\dd s               
				       \ge
				      \frac{\min\{C_0\tau^2,
					      \kappa(P\gamma(\delta),P\gamma(-\delta),\tau)\}}
				      {\max\{\ee,\delta\}},
			      \end{aligned}
		      \end{equation}
		      where \(P\) is the nearest-point projection onto \(\on\).
	\end{enumerate}
\end{lemma}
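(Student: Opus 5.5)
We first fix \(\kappa\) explicitly.  For \((\bb_+,\bb_-)\in\onp\times\onn\) and \(\tau\in[0,1]\), let \(\kappa(\bb_+,\bb_-,\tau)\) be the \emph{excess} of the one-dimensional (\(\ee=1\)) energy
\[
	\int_\br\Bigl(\tfrac12\norm{\gamma'}^2+\wff(\gamma)\Bigr)\dd s-\sigma
\]
over curves \(\gamma\) that start in \(\overline{B_\tau(\bb_-)}\) and end in \(\overline{B_\tau(\bb_+)}\), minimized over such curves.  Equivalently, one may use the infimum of the corresponding geodesic length in the degenerate metric \(\sqrt{2\wff}\,\abs{\dd\aa}\), minus \(\sigma\).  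Continuity of \(\kappa\) follows from the Lipschitz dependence of this geodesic distance on its endpoints, using \(\sqrt{2\wff}\le C\).  Nonnegativity follows from \eqref{old-differ-ineq}: any curve from \(B_\tau(\onn)\) to \(B_\tau(\onp)\) has \(\int\sqrt{2\wff}\,\norm{\gamma'}\ge \dwf(\text{end})-\dwf(\text{start})\).  The start and end values of \(\dwf\) are \(O(\tau^2)\)-close to \(0\) and \(\sigma\), so we should subtract that correction, or else define \(\kappa\) with endpoints exactly at \(\bb_\pm\) and control the \(\tau\)-perturbation separately by the \(C_0\tau^2\) branch.

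For property (1), suppose \(\kappa(\bb_+,\bb_-,0)=0\).  Then the geodesic length from \(\bb_-\) to \(\bb_+\) equals \(\sigma\).  A minimizing sequence has energy tending to \(\sigma\), and by Lemma \ref{quasi-profile} it must saturate \(\wff=F\) along the path.  The Modica identity then forces the path to be (a reparametrization of) a minimal connecting orbit of \(F\).  By Lemmas \ref{minimal-orbits} and \ref{minimal-equi}, this gives \(\norm{\bb_+-\bb_-}=2\).  Conversely, if \(\norm{\bb_+-\bb_-}=2\), the explicit profile \(\Theta_0\) of \eqref{minimal-profile} has energy exactly \(\sigma\) and satisfies \(\rho(\Theta_0)\le1\), hence \(\wff(\Theta_0)=F(\Theta_0)\).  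This step needs a compactness argument for minimizing sequences of the degenerate length, which are bounded since \(\wff=1/4\) off a bounded neighbourhood.  It also needs the fact that along the straight segment \(\rho(\Theta_0)\) reaches \(1\) exactly at the midpoint.

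For property (2), rescale \(s=\ee z\).  The left side of \eqref{one-energy} becomes \(\ee^{-1}\) times the \(\ee=1\) functional on \([-\delta/\ee,\delta/\ee]\), namely
\[
	\int\Bigl(\tfrac12\norm{\tilde\gamma'}^2+\wff(\tilde\gamma)\Bigr)\dd z-\bigl(\dwf(\gamma(\delta))-\dwf(\gamma(-\delta))\bigr).
\]
The key step is to split into two cases.

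\emph{Case (a): the curve passes near the wells.}  Suppose \(\tilde\gamma\) enters \(B_\tau(\onn)\) and later \(B_\tau(\onp)\).  Restrict to the last exit from \(B_\tau(\onn)\) and the first entry into \(B_\tau(\onp)\).  The middle piece has energy at least \(\sigma+\kappa(\cdot,\cdot,\tau)\) minus an \(O(\tau^2)\) correction.  The outer pieces have nonnegative excess by Modica.  The endpoints of the middle piece must then be compared with \(P\gamma(\pm\delta)\): the projections may differ, and controlling this drift via the energy spent near the wells is where I expect the main technical obstacle.  I would try bounding the well-tangential drift by length, so that drift of size \(\eta\) over a rescaled interval of length \(L=2\delta/\ee\) costs at least \(\eta^2/L\).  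This produces the factor \(1/\max\{\ee,\delta\}\) after undoing the scaling.

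\emph{Case (b): the curve stays away from the wells.}  Suppose \(\tilde\gamma\) stays outside \(B_\tau(\on)\) on a stretch.  There \(\wff\ge c\tau^2\), so the excess over Modica is at least a multiple of \(\tau^2\) times the length, and the gap \(\frac12(\norm{\tilde\gamma'}-\sqrt{2\wff})^2\) yields at least \(C_0\tau^2\) in rescaled units.  Combining the two cases gives the minimum in \eqref{one-energy}, with \(\delta_0\) small enough that \(P\) is well defined and the endpoint \(\dwf\)-values are within \(O(\delta_0^2)\) of \(0\) and \(\sigma\).
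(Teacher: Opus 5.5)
There is a genuine gap, and it lies in your choice of \(\kappa\). Both versions you propose, the whole-line energy excess and the degenerate length \(d^*_{\wff}\) minus \(\sigma\), ignore the endpoints. The reason is that \(\wff=0\) on \(\on\) and, for \(n\ge2\), each of \(\onn\) and \(\onp\) is connected, so a curve can slide along a well at zero cost.

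Explicitly, take any \((\bb_+,\bb_-)\in\onp\times\onn\) and \(q\in\mathbb S^{n-1}\), and set \(\qq_-:=\bb_+(\ii-2qq^\top)\in\onn\). Join \(\bb_-\) to \(\qq_-\) inside \(\onn\), which has length \(0\). Then join \(\qq_-\) to \(\bb_+\) by the segment \(r\mapsto\bb_+(\ii-rqq^\top)\), \(r\) from \(2\) down to \(0\); this is the trace of \(\Theta_0\) and its length is exactly \(\sigma\). Hence \(d^*_{\wff}(\bb_+,\bb_-)=\sigma\) for every pair. Likewise, the infimum of \(\int_\br(\frac12\norm{\gamma'}^2+\wff(\gamma))\) with ends \(\bb_\pm\) is \(\sigma\) for every pair, by spreading the slide over a long time. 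So your \(\kappa(\cdot,\cdot,0)\equiv0\), which contradicts (1).

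The step that fails in your argument for (1) is the compactness of minimizing sequences. The problem is not escape to infinity. The sequences degenerate into a slide along \(\onn\) followed by a minimal orbit for a different pair \((\bb_+,\qq_-)\). Minimality of a pair is about whether \(\sigma\) is attained, not about its value. Separately, your claim that Lemma~\ref{quasi-profile} forces \(\wff=F\) along a \(\wff\)-minimizer is unsupported; that lemma only gives \(\wff\le F\).

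The missing idea is that sliding is penalized only because the rescaled window \([-\delta/\ee,\delta/\ee]\) is finite. A drift of size \(\eta\) near a well costs at least of order \(\eta^2/\delta\) in \eqref{one-energy}, as you correctly note in Case (a). Consequently \(\kappa\) must be a defect that rules out sliding, for instance one measuring how far pairs \((\qq_+,\qq_-)\) with \(\norm{\qq_\pm-\bb_\pm}\le\tau\) are from being minimal. Property (2) then needs a quantitative rigidity statement: a transition layer whose ends stay \(\tau\)-close to a non-minimal pair has excess bounded below by \(\kappa\). The paper does not reprove this. It imports it from \cite[Lemmas~4.1--4.2]{Liu2025}, applied to the quasi-potential \(\wff\) with \(\dwf\) as Agmon primitive, and then reads off the zero set of \(\kappa(\cdot,\cdot,0)\) from Lemma~\ref{minimal-equi}.

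Your Case (b) also needs repair. Staying outside \(B_\tau(\on)\) on a stretch does not give excess of order \(\tau^2\) times its length: \(\Theta_0\) spends rescaled time of order \(\log(1/\tau)\) there with zero excess. Only dwelling longer than the transit time is penalized. Finally, your two cases are not exhaustive. The hypotheses only place \(\gamma(\pm\delta)\) in \(B_{\delta_0}(\onpm)\), so a curve starting at distance in \((\tau,\delta_0)\) from \(\onn\) need never enter \(B_\tau(\onn)\).
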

\begin{proof}
We follow the one-dimensional argument of
\cite[Section~4, Lemmas~4.1--4.2]{Liu2025}.  We indicate why it applies
to the present matrix-valued quasi-potential \(\wff\).

The zero set of \(\wff\) is
\[
    \on=\onp\cup\onn .
\]
After choosing \(\delta_0>0\) sufficiently small, the nearest-point
projection
\[
    P:\mathcal U_{\delta_0}(\on)\to \on
\]
is smooth, and \(B_{\delta_0}(\onp)\cup B_{\delta_0}(\onn)\subset
\mathcal U_{\delta_0}(\on)\).  By Lemma~\ref{lemma-quasi-distance-function},
the quasi-distance \(\dwf\) satisfies the Modica-type differential inequality
\[
    \norm{D\dwf(\aa)}
    \le
    \sqrt{2\wff(\aa)}
    \qquad\text{for a.e. }\aa\in\Mn .
\]
Thus \(\dwf\) plays the role of the Agmon primitive associated with the
quasi-potential \(\wff\).

Define the corresponding Agmon-type semi-distance by
\[
    d_{\wff}^{*}(\bb_+,\bb_-)
    :=
    \inf_{\gamma}
    \int \norm{\gamma'(s)}
    \sqrt{2\wff(\gamma(s))}\,\dd s ,
\]
where the infimum is taken over all absolutely continuous curves joining
\(\bb_-\) to \(\bb_+\).  The defect function \(\kappa\) in the statement is
defined as in \cite[Lemma~4.1]{Liu2025}, with the quasi-potential
\(\wff\) and the semi-distance \(d_{\wff}^{*}\).  The proof of
\cite[Lemma~4.1]{Liu2025} then gives that \(\kappa\) is continuous,
non-negative, and satisfies
\[
    \kappa(\bb_+,\bb_-,0)=0
    \quad\Longleftrightarrow\quad
    (\bb_+,\bb_-)\ \text{is a minimal pair for the quasi-potential }
    \wff .
\]
For the present Saint Venant--Kirchhoff potential, the minimal pairs are
characterized by Lemma~\ref{minimal-equi}; hence the preceding condition is
equivalent to
\[
    \norm{\bb_+-\bb_-}=2 .
\]

Applying the one-dimensional estimate
\cite[Lemma~4.2]{Liu2025} to the quasi-potential \(\wff\) gives, for every
\(\gamma\in H^1([-\delta,\delta];\Mn)\) satisfying
\[
    \gamma(-\delta)\in B_{\delta_0}(\onn),
    \qquad
    \gamma(\delta)\in B_{\delta_0}(\onp),
\]
and every \(\tau\in(0,\tau_0)\),
\[
    \int_{-\delta}^{\delta}
    \left(
    \frac12\norm{\gamma'}^2
    +\frac1{\ee^2}\wff(\gamma)
    -\frac1\ee(\dwf\circ\gamma)'
    \right)\dd s
    \ge
    \frac{
    \min\{C_0\tau^2,
    \kappa(P\gamma(\delta),P\gamma(-\delta),\tau)\}
    }{\max\{\ee,\delta\}} .
\]
This is exactly \eqref{one-energy}.
\end{proof}

\begin{proof}[Proof of the minimal-pair condition]
	We prove the assertion for every time \(t\) outside a null set for which the trace
	statements below hold.  Suppose, by contradiction, that the minimal-pair condition
	fails at such a time.  Then there are \(\alpha>0\) and a compact set
	\(E_t^*\subset\Gamma_t\) with \(\hn(E_t^*)\ge\alpha\) such that
	\((\aa_+(p,t),\aa_-(p,t))\) is not a minimal pair for \(p\in E_t^*\).  By the
	positive definiteness of \(\kappa\) and Lemma \ref{minimal-equi}, after possibly
	shrinking \(E_t^*\), there exists \(\beta^*>0\) such that
	\begin{equation}\label{kappa-positive}
		\kappa(\aa_+(p,t),\aa_-(p,t),0)\ge \beta^*
		\qquad\text{for }p\in E_t^* .
	\end{equation}

	Let \(\nu\) be the normal pointing from \(\Omega_t^-\) to \(\Omega_t^+\).  The local strong convergence in Proposition \ref{prop:limit-convergence},
	and Fubini's theorem imply that there exists a null set
	\(N\subset(0,\delta_0)\) such that, for every
	\(\delta\in(0,\delta_0)\setminus N\),
	\begin{subequations}\label{trace-convergences}
		\begin{align}
			\aaek(p+\delta\nu(p),t) & \to\aa_+(p+\delta\nu(p),t)
			                        &               & \text{strongly in }L^2(\Gamma_t), \\
			\aaek(p-\delta\nu(p),t) & \to\aa_-(p-\delta\nu(p),t)
			                        &               & \text{strongly in }L^2(\Gamma_t).
		\end{align}
	\end{subequations}
	The trace
	theorem combined with a diagonal argument gives \(\delta_k\downarrow0\), with
	\(\delta_k\notin N\), such that
	\begin{equation}\label{strong-1}
		\aaek(p\pm\delta_k\nu(p),t)\to\aa_\pm(p,t)
		\quad\text{strongly in }L^2(\Gamma_t).
	\end{equation}
	By Egorov's theorem, we can choose a compact subset
	\(E_t\subset E_t^*\) with \(\hn(E_t)>0\) such that the convergence in
	\eqref{strong-1} is uniform on \(E_t\).  Hence, for all sufficiently large \(k\),
	\[
		\aaek(p-\delta_k\nu(p),t)\in B_{\delta_0}(\onn),
		\qquad
		\aaek(p+\delta_k\nu(p),t)\in B_{\delta_0}(\onp),
		\qquad p\in E_t .
	\]
	Since \(\kappa\) is uniformly continuous on the compact set
	\(\onp\times\onn\times[0,1]\), we first choose
	\(\tau\in(0,\tau_0)\) so small that
	\[
		\kappa(\bb_+,\bb_-,\tau)
		\ge \kappa(\bb_+,\bb_-,0)-\frac14\beta^*
		\qquad\text{for all }(\bb_+,\bb_-)\in\onp\times\onn .
	\]
	Using the uniform convergence on \(E_t\) and the continuity of the nearest-point
	projection \(P\), we then get, for all large \(k\),
	\begin{equation}\label{beta}
		\inf_{p\in E_t}
		\kappa\!\left(
		P\aaek(p+\delta_k\nu(p),t),
		P\aaek(p-\delta_k\nu(p),t),
		\tau
		\right)
		\ge \frac12\beta^*=:\beta>0 .
	\end{equation}

	For \(p\in E_t\), define the normal curve
	\[
		\gamma_{p,k}(s):=\aaek(p+s\nu(p),t),
		\qquad -\delta_k\le s\le\delta_k .
	\]
	Applying Lemma \ref{kappa} to \(\gamma_{p,k}\) and using \eqref{beta}, we obtain
	\begin{equation}\label{normal-lower-bound}
		\begin{aligned}
			 & \int_{-\delta_k}^{\delta_k}
			\left(
			\frac12\norm{\p_s\aaek}^2
			+\frac1{\ee_k^2}\wff(\aaek)
			-\frac1{\ee_k}\p_s(\dwf(\aaek))
			\right)\dd s                   \\
			 & \qquad\ge
			\frac{\min\{C_0\tau^2,\beta\}}{\max\{\ee_k,\delta_k\}}
			\qquad\text{for every }p\in E_t .
		\end{aligned}
	\end{equation}
	Integrating over \(E_t\) gives
	\begin{equation}\label{diver-inter}
		\begin{aligned}
			 & \int_{E_t}\int_{-\delta_k}^{\delta_k}
			\left(
			\frac12\norm{\p_s\aaek}^2
			+\frac1{\ee_k^2}\wff(\aaek)
			-\frac1{\ee_k}\p_s(\dwf(\aaek))
			\right)\dd s\,\dd\hn                     \\
			 & \qquad\ge
			\frac{\hn(E_t)\min\{C_0\tau^2,\beta\}}
			{\max\{\ee_k,\delta_k\}} .
		\end{aligned}
	\end{equation}

	We now obtain the contradictory upper bound.  In the tubular coordinates
	\(x=p+s\nu(p)\), the Jacobian is \(J(p,s)=1+\mathcal{O}(\abs{s})\).  Moreover,
	\(\xi=\nu+\mathcal{O}(s^2)\) near \(\Gamma_t\), and the coercivity estimate gives
	\[
		\frac1{\ee_k}\int_\Omega \abs{\dgt}^2\abs{\nabla\dwf(\aaek)}\dx\le C .
	\]
	Therefore the difference between \(\ee_k^{-1}\xi\cdot\nabla\dwf(\aaek)\) and
	\(\ee_k^{-1}\p_s\dwf(\aaek)\) on the normal cylinder over \(E_t\) is uniformly
	bounded.  Since the tangential-gradient part of \(\norm{\nabla\aaek}^2\) is
	non-negative, the scaled modulated-energy estimate \eqref{relative-entropy-1} and
	the area formula yield
	\begin{equation}\label{bound-inter}
		\begin{aligned}
			 & \int_{E_t}\int_{-\delta_k}^{\delta_k}
			\left(
			\frac12\norm{\p_s\aaek}^2
			+\frac1{\ee_k^2}\wff(\aaek)
			-\frac1{\ee_k}\p_s(\dwf(\aaek))
			\right)\dd s\,\dd\hn
			\le C .
		\end{aligned}
	\end{equation}
	The right-hand side of \eqref{diver-inter} tends to \(+\infty\) because
	\(\max\{\ee_k,\delta_k\}\to0\), whereas \eqref{bound-inter} is uniformly bounded.
	This contradiction proves \eqref{result-minimal}.
\end{proof}

\section*{Acknowledgments}
The author thanks Fanghua Lin and Yuning Liu for helpful discussions, and thanks
his supervisor Yaguang Wang for his guidance and support. This research was supported by the National Natural Science Foundation of China under Grant No. 12331008. 

\vspace{0.5em}
\paragraph*{Data availability statement}
No data were generated or analyzed in this work.

\vspace{0.5em}
\paragraph*{Conflict of interest statement}
The author declares no competing interests.

\end{document}